\providecommand{\bysame}{\leavevmode\hbox to3em{\hrulefill}\thinspace}
\providecommand{\MR}{\relax\ifhmode\unskip\space\fi MR }
\providecommand{\href}[2]{#2}
\DeclareFontFamily{U}{mathx}{\hyphenchar\font45}
\DeclareFontShape{U}{mathx}{m}{n}{
	<5> <6> <7> <8> <9> <10>
	<10.95> <12> <14.4> <17.28> <20.74> <24.88>
	mathx10
}{}
\DeclareSymbolFont{mathx}{U}{mathx}{m}{n}
\DeclareMathAccent{\widecheck}{0}{mathx}{"71}
\DeclareMathAccent{\wideparen}{0}{mathx}{"75}
\setlist[enumerate]{leftmargin=1.5em}
\setlist[itemize]{leftmargin=1.5em}
\definecolor{green}{rgb}{0,0.8,0} % Redefines the color green.
\newtheorem{theorem}{Theorem}[section]
\newtheorem{lemma}[theorem]{Lemma}
\newtheorem{proposition}[theorem]{Proposition}
\theoremstyle{definition}
\theoremstyle{remark}
\newtheorem{remark}[theorem]{Remark}
\numberwithin{equation}{section}
\newcommand{\nrm}{\@ifstar{\nrmb}{\nrmi}}
\newcommand{\nrmi}[1]{\Vert{#1}\Vert}
\newcommand{\nrmb}[1]{\left\Vert{#1}\right\Vert}
\newcommand{\abs}{\@ifstar{\absb}{\absi}}
\newcommand{\absi}[1]{\vert{#1}\vert}
\newcommand{\absb}[1]{\left\vert{#1}\right\vert}
\newcommand{\brk}{\@ifstar{\brkb}{\brki}}
\newcommand{\brki}[1]{\langle{#1}\rangle}
\newcommand{\brkb}[1]{\left\langle{#1}\right\rangle}
\newcommand{\set}{\@ifstar{\setb}{\seti}}
\newcommand{\seti}[1]{\{#1\}}
\newcommand{\setb}[1]{\left\{ #1\right\}}
\newcommand{\nnrm}[1]{{\vert\kern-0.25ex\vert\kern-0.25ex\vert #1 
    \vert\kern-0.25ex\vert\kern-0.25ex\vert}}
\newcommand{\VERT}[1]{{\left\vert\kern-0.25ex\left\vert\kern-0.25ex\left\vert #1 
    \right\vert\kern-0.25ex\right\vert\kern-0.25ex\right\vert}}
\newcommand{\lap}{\Delta}
\newcommand{\ud}{\mathrm{d}}
\newcommand{\rd}{\partial}
\newcommand{\nb}{\nabla}
\def\ct{\cos(\theta)}
\def\bR{\bar{r}}
\def\bz{\bar{z}}
\def\bbx{\bold{x}}
\def\bby{\bold{y}}
\newcommand{\Gmm}{\Gamma}
\newcommand{\varep}{\varepsilon}
\newcommand{\tht}{\theta}
\newcommand{\bbR}{\mathbb R}
\newcommand{\calF}{\mathcal F}
\newcommand{\dd}{\textrm{d}}
\begin{document}

\title{Axi-symmetric solutions for active vector models \\ generalizing 3D Euler
	and electron--MHD equations}%: Title of the article
\author{Dongho Chae\thanks{Department of Mathematics, Chung-ang University. E-mail: dchae@cau.ac.kr} \and Kyudong Choi\thanks{Department of Mathematical Sciences, Ulsan National Institute of Science and Technology, kchoi@unist.ac.kr} 
	\and  In-Jee Jeong\thanks{Department of Mathematical Sciences and RIM, Seoul National University.  E-mail: injee\_j@snu.ac.kr}}

\date{December 23, 2022} 
% ----------------------------------------------------------------
\maketitle
% ----------------------------------------------------------------

\begin{abstract}
	We study systems interpolating between the 3D incompressible Euler and electron--MHD equations, given by \begin{equation*}
		\left\{
		\begin{aligned}
			& \rd_t B + V \cdot \nb B = B\cdot \nb V,  \\
			& V = -\nb\times (-\lap)^{-a} B, \\
			& \nabla\cdot B = 0,
		\end{aligned}
		\right.
	\end{equation*} where $B$ is a time-dependent vector field in $\bbR^3$. Under the assumption that the initial data is axi-symmetric without swirl, we prove local well-posedness of Lipschitz continuous solutions and existence of traveling waves in the range $1/2<a<1$. These generalize the corresponding results for the 3D axisymmetric Euler equations and should be useful in the study of stability and instability for axisymmetric solutions. 
\end{abstract}

\section{Introduction}
\subsection{Active vector system}
In this paper, we investigate the following {active vector systems} introduced in \cite{CJ1}: \begin{equation}\label{eq:EH-active}
	\left\{
	\begin{aligned}
		& \rd_t B + V \cdot \nb B = B\cdot \nb V,  \\
		& V = -\nb\times (-\lap)^{-a} B, \\
		& \nabla\cdot B = 0, 
	\end{aligned}
	\right.
\end{equation}where $B(t,\cdot):\bbR^3\rightarrow\bbR^3$ is a time-dependent divergence free vector field defined in $\bbR^3$. Alternatively, the system can be written in the more compact form \begin{equation}\label{eq:EH}
	\left\{
	\begin{aligned}
		& \rd_t B + \nb\times ( ( \nb\times (-\lap)^{-a} B ) \times B) = 0,  \\
		& \nabla \cdot B = 0. \\ 
	\end{aligned}
	\right.
\end{equation} The system \eqref{eq:EH-active} generalizes three important physical models, namely the three-dimensional incompressible Euler equations, the electron magneto-hydrodynamics (E--MHD) system, and the entire family of generalized surface quasi-geostrophic (SQG) equations; see \S \ref{subsec:motiv} below for details. The recent paper \cite{CJ1} gave local well-posedness of smooth solutions for \eqref{eq:EH-active} in the range $\frac12 \le a \le 1$, and more generally when $(-\lap)^{-a} B$ is replaced with $\Gmm[B]$ where $\Gmm$ is a Fourier multiplier with a radial symbol which lies between $(-\lap)^{-1}$ and $(-\lap)^{-\frac{1}{2}}$. In the current work, we focus on the case $(-\lap)^{-a}$ with $\frac{1}{2}< a <1$ and initiate the study of \textit{axi-symmetric} solutions, towards a better understanding of the issue of global regularity versus finite-time singularity formation. The study of axisymmetric Euler equation (corresponding to the case $a=1$) is a very classical subject, and the axisymmetric system in the E--MHD case ($a=0$) was recently studied in \cite{ChaeWeng1,ChaeWeng2,JO1,LiPa}. We shall provide several related works and motivations for the study of axi-symmetric solutions in \S \ref{subsec:motiv} after stating our main results in \S \ref{subsec:main}. 
%Note that since we are assuming that the symbol of $\Gmm$ is radial, (formally) rotational symmetries of the initial data are preserved in time.

\subsection{Axi-symmetric systems}\label{subsec:axi}

By axi-symmetry, we mean solutions which are left invariant under all rotations fixing an axis; employing the cylindrical coordinate system $(r,\tht,z)$, we say that a vector field $B$ is axi-symmetric if it has the form \begin{equation*}\label{eq:B-axi}
	\begin{split}
		B =  b^{r}(r,z)e_{r} + b^{\tht} (r,z) e_{\tht} + b^{z}(r,z) e_{z}. 
	\end{split}
\end{equation*} Then, we similarly have \begin{equation*}\label{eq:V-axisym}
\begin{split}
	V = v^{r}(r,z)e_{r} + v^{\tht} (r,z) e_{\tht} + v^{z}(r,z) e_{z}.
\end{split}
\end{equation*} It will be useful to introduce the variable $U$ defined by the unique solution of \begin{equation}\label{eq:U-def}
\left\{
\begin{aligned}
	&\nb \times U  = B, \\
	&\nb \cdot U = 0
\end{aligned}
\right.
\end{equation} decaying at infinity. This is precisely the fluid velocity in the case of 3D Euler, namely when $a=1$. Under the assumption of axi-symmetry, we write \begin{equation*}\label{eq:U-axi}
\begin{split}
	U =  u^{r}(r,z)e_{r} + u^{\tht} (r,z) e_{\tht} + u^{z}(r,z) e_{z}. 
\end{split}
\end{equation*}  
Then, it turns out that \eqref{eq:EH-active} can be written as a system of equations for $b^\tht$ and $u^\tht$:
\begin{equation}\label{eq:EH-axisym}
	\left\{
	\begin{aligned}
	&\rd_t b^\tht + (v^r\rd_r + v^z \rd_z) b^\tht = \frac{v^r}{r} b^\tht - \frac{b^{r}}{r} v^{\tht} + (b^r\rd_r + b^z\rd_z) v^\tht ,\\
	&\rd_t u^\tht + (v^r\rd_r + v^z \rd_z) u^\tht = - \frac{v^r}{r} u^\tht. 
	\end{aligned}
	\right.
\end{equation} The system is closed in terms of $(b^\tht,u^\tht)$ since $u^\tht$ at each moment of time determines $b^r$ and $b^z$ via $b^r = -\rd_z u^\tht$ and $b^z = r^{-1}\rd_{r}(ru^\tht),$ which follows from \eqref{eq:U-def}. In turn, components of $B$ determine those of $V$, see \eqref{eq:VB} below.  We have that $u^\tht \equiv 0$ if and only if $v^\tht \equiv 0$, and therefore one may observe that the ansatz $u^\tht \equiv 0$ propagates in time for \eqref{eq:EH-axisym}. Under this assumption, we obtain the simpler \textit{no--swirl} equation: \begin{equation}\label{eq:EH-axisym-noswirl}
\begin{split}
	\rd_t b^\tht + (v^r\rd_r + v^z \rd_z) b^\tht = \frac{v^r}{r} b^\tht . 
\end{split}
\end{equation} For convenience of the reader, we provide a derivation of \eqref{eq:EH-axisym} below. For 3D axi-symmetric Euler equations, such a formulation is well-known; see for instance \cite[Chap. 5]{MB}.

\subsection{Main results}\label{subsec:main}

The main results of this paper are local well-posedness in the Lipschitz function class and existence of traveling wave solutions in the same function class, in the range $\frac12<a\le1$. The latter result generalizes known solutions in the three-dimensional Euler equations $(a=1)$ and in particular shows that there are at least some non-trivial global strong solutions to the generalized systems \eqref{eq:EH-active}. 

To state our results, we define a space of Lipschitz continuous functions:  \begin{equation*}\label{def:Lip}
	\begin{split}
		X^1(\bbR^3) = \left\{ f \in L^1 \cap L^\infty(\bbR^3)\, :\, \nb f \in L^\infty(\bbR^3) \right\}. 
	\end{split}
\end{equation*}

For axisymmetric no-swirl equation, we provide local existence and uniqueness of solutions in the class $X^1$. 
\begin{theorem}[Well-posedness in the Lipschitz class] \label{thm:lwp}
	Let  {$\frac 1 2 < a <1$.} Assume that $B_0 \in X^1$ is a divergence-free vector field in $\bbR^3$. Then, for any $T>0$, there is at most one solution to \eqref{eq:EH-active} belonging in $L^\infty([0,T];X^1)$ corresponding to $B_0$. Furthermore, if we assume in addition that $B_0$ takes the form  $b^\tht_{0}(r,z) e_\tht$, there exist $T = T(\nrm{B_0}_{X^1})>0$ and a solution $B = b^\tht e_\tht$ to \eqref{eq:EH-active} in   $L^\infty([0,T];X^1)$, where $b^\tht$ solves \eqref{eq:EH-axisym-noswirl}. 
\end{theorem}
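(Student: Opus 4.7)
The argument pivots on the operator estimate: for $a \in (1/2, 1)$ and divergence-free $B \in X^1$,
\[ \|V\|_{L^\infty} + \|\nabla V\|_{L^\infty} \leq C(a) \|B\|_{X^1}. \]
Indeed, $\nabla V$ corresponds to a Calder\'on-Zygmund-type operator of order $2(1-a) \in (0,1)$; using the standard integral representation, cancellation via $|B(x)-B(y)| \leq \|\nabla B\|_{L^\infty}|x-y|$ reduces the near-diagonal integrand to $|x-y|^{2a-4}$, integrable in $\bbR^3$ precisely when $a > 1/2$, while the far field is controlled by $B \in L^1 \cap L^\infty$. Crucially, this method does \emph{not} yield an $L^\infty$ bound on $\nabla^2 V$, which corresponds to an operator of order $3-2a > 1$.

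\textbf{Existence.} I would mollify $B_0 = b^\theta_0 e_\theta$ to smooth axi-symmetric no-swirl data $B_0^{(n)}$, and apply the smooth local well-posedness of \cite{CJ1} to produce classical solutions $B^{(n)}$. By uniqueness of smooth solutions, each $B^{(n)} = b^{\theta,(n)} e_\theta$ satisfies the scalar equation \eqref{eq:EH-axisym-noswirl}. The scalar form is the essential reason for restricting to the no-swirl ansatz: \eqref{eq:EH-axisym-noswirl} contains only the source $(v^r/r) b^\theta$, rather than the full stretching term $B \cdot \nabla V$, and therefore avoids the unavailable $\nabla^2 V \in L^\infty$. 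Since $v^r$ vanishes on the axis for smooth axi-symmetric $V$, $\|v^r/r\|_{L^\infty} \leq C \|\nabla V\|_{L^\infty}$; a characteristics estimate for $b^{\theta,(n)}$, together with the analogous bound for $\nabla b^{\theta,(n)}$ (with the subtlety of $\nabla(v^r/r)$ at the axis resolved using the Taylor structure of smooth axi-symmetric fields), combines with the operator estimate to give
\[ \frac{d}{dt} \|B^{(n)}\|_{X^1} \leq C \|B^{(n)}\|_{X^1}^2 \]
uniformly in $n$. Hence all $B^{(n)}$ exist on a common interval $[0,T]$ with $T = T(\|B_0\|_{X^1})$. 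Passing to the limit via Arzel\`a-Ascoli (uniform Lipschitz in space, plus Lipschitz in time into $L^\infty$ from the equation) extracts a subsequential limit $B = b^\theta e_\theta \in L^\infty([0,T]; X^1)$ solving \eqref{eq:EH-active}.

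\textbf{Uniqueness.} Given two solutions $B_1, B_2 \in L^\infty([0,T]; X^1)$ with the same initial data, the operator estimate puts $V_i \in L^\infty([0,T]; W^{1,\infty})$, so each $V_i$ generates a unique bi-Lipschitz flow $\Phi_i$ satisfying the Helmholtz identity $B_i(t, \Phi_i(t,x)) = \nabla \Phi_i(t,x) B_0(x)$. Uniqueness of $B_i$ thus reduces to showing $\Phi_1 \equiv \Phi_2$. Standard flow comparison gives $\|\Phi_1 - \Phi_2\|_{L^\infty} \lesssim \int_0^t \|\delta V\|_{L^\infty}\, ds$, controlled linearly by $\|\delta B\|_{L^1 \cap L^\infty}$ via the operator estimate applied to $\delta B$. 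Comparing the Jacobians $\nabla \Phi_i$ encounters the term $(\nabla V_1)(\Phi_1) - (\nabla V_1)(\Phi_2)$; since $\nabla V_1$ is only H\"older $C^{2a-1}$ (inherited from the fact that the operator $V \leftarrow B$ is of negative order $1-2a$), this is bounded by $\|\nabla V_1\|_{C^{2a-1}} \|\Phi_1 - \Phi_2\|_{L^\infty}^{2a-1}$. A parallel H\"older interpolation yields $\|\nabla \delta V\|_{L^\infty} \lesssim \|\delta B\|_{L^\infty}^{2a-1-\eps}$ up to an a priori constant depending on the $X^1$ bounds of $B_1, B_2$. Substituting back through the Helmholtz identity for $\delta B$ produces a closed integral inequality for $\|\delta B\|_{L^\infty}$ with sub-linear modulus of continuity of Osgood type; since $\delta B(0)=0$, Osgood's lemma forces $\delta B \equiv 0$.

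\textbf{Main obstacle.} The principal difficulty is the uniqueness step. The absence of $\nabla^2 V \in L^\infty$ (intrinsic to $a < 1$) rules out a direct Lipschitz comparison of the $\nabla \Phi_i$ and forces closure via the H\"older exponent $2a-1$ inherited from Calder\'on-Zygmund theory of positive-order operators, combined with Osgood's lemma. At $a=1$ this exponent becomes $1$, recovering the Lipschitz argument standard for 3D axi-symmetric Euler.
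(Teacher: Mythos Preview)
Your existence argument is essentially the same as the paper's: mollify the axi-symmetric no-swirl data, invoke the smooth local theory from \cite{CJ1}, close a uniform $X^1$ a priori estimate using that \eqref{eq:EH-axisym-noswirl} only involves $v^r/r$ and first derivatives of $V$, and pass to the limit. No issues there.

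The uniqueness argument, however, has a genuine gap. Your Lagrangian comparison produces, as you correctly identify, H\"older-type bounds with exponent $2a-1$: both $(\nabla V_1)(\Phi_1)-(\nabla V_1)(\Phi_2)$ and $\nabla\delta V$ are controlled by the $(2a-1)$-th power of the relevant difference. Feeding these back through the Helmholtz identity yields, schematically, an inequality of the form $\dot\rho \lesssim \rho^{2a-1}$ for the combined flow/Jacobian difference $\rho$, with $\rho(0)=0$. But the modulus $\omega(s)=s^{2a-1}$ with $2a-1<1$ is \emph{not} an Osgood modulus: $\int_0^1 s^{-(2a-1)}\,ds<\infty$, so the ODE $\dot\rho=\rho^{2a-1}$, $\rho(0)=0$ admits the nontrivial solution $\rho(t)=c\,t^{1/(2-2a)}$. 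Osgood's lemma therefore does not force $\rho\equiv 0$, and the argument does not close. (The phrase ``sub-linear modulus of Osgood type'' is a contradiction in terms; Osgood requires the modulus to be at least linear near zero, e.g.\ $s$, $s\log(1/s)$, etc.)

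The paper sidesteps this obstruction by working at the level of the auxiliary velocity $U$ (one derivative below $B$, defined by $\nabla\times U=B$, $\nabla\cdot U=0$) and running an Eulerian $L^2$ energy estimate on $D=U_1-U_2$. After integration by parts, the dangerous term becomes a commutator $[B_2\times,(-\Delta)^{1-a}]D$, which is bounded in $L^2$ by $C\|B_2\|_{X^1}\|D\|_{L^2}$ via a near-field/far-field kernel split. This yields a \emph{linear} Gronwall inequality $\frac{d}{dt}\|D\|_{L^2}^2\le C\|D\|_{L^2}^2$, from which $D\equiv 0$ follows immediately. The passage to $U$ and the commutator structure are precisely what convert the sublinear H\"older loss into a linear estimate; your Lagrangian route at the level of $B$ does not seem to offer an analogous mechanism.
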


We emphasize that while the \textit{existence} could be shown only for axisymmetric and no-swirl initial data, the \textit{uniqueness} results holds not only within the axisymmetric equation but for the full system \eqref{eq:EH-active} (or equivalently \eqref{eq:EH}). This in particular rules out the possibility of non-axisymmetric Lipschitz solutions corresponding to an axisymmetric Lipschitz data. 

Our second main result proves existence of non-trivial traveling wave solutions which belong to the above well-posedness class. For this, it will be convenient to introduce   the \textit{relative vorticity} $\xi$: if $B(t,\bbx) =  b^{\tht} (t,r,z) e_{\tht}  $, we define $\xi(t,r,z)=b^\tht(t,r,z)/r.$ Then, by interpreting $\xi(t,\cdot)$ as an axi-symmetric function on $\mathbb{R}^3$, \eqref{eq:EH-axisym-noswirl} simply  becomes 
\begin{equation}\label{eq_xi_evol}
	\partial_t \xi + V\cdot \nabla_{\mathbf{x}} \xi=0,\quad t>0,\quad \bbx\in\mathbb{R}^3
\end{equation}
for the divergence free velocity $V = - (-\lap)^{-a} \nb\times B$. 
\begin{theorem}[Lipschitz continuous traveling waves]\label{thm_TW_exist}
	Let  {$\frac 1 2 < a <1$.} Then, there exists a non-trivial traveling wave solution $\xi(t,\bbx)=\bar\xi(x-Wte_z)$ for some $W>0$ of \eqref{eq_xi_evol} such that
	\begin{equation*}\begin{split}
			\bar\xi\in L^\infty(\mathbb{R}^3), \mbox{ Lipschitz continuous, non-negative, and compactly supported.}\\
			%&\mbox{spt}(\bar\xi)\quad\mbox{is compact in } \mathbb{R}^3.
	\end{split}\end{equation*}
\end{theorem}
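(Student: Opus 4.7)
The plan is to construct $\bar\xi$ as a maximizer of a concave penalized energy functional, in the spirit of the classical variational constructions of vortex rings due to Fraenkel--Berger, Friedman--Turkington, and Burton, adapted to the fractional setting $\tfrac12<a<1$. A profile $\bar\xi$ produces a traveling wave of speed $W>0$ in the $+z$-direction for \eqref{eq_xi_evol} iff $(V - We_z)\cdot\nabla\bar\xi = 0$. In the axisymmetric no-swirl class, $V$ admits a Stokes stream function $\Psi$ with $v^r = -r^{-1}\partial_z\Psi$, $v^z = r^{-1}\partial_r\Psi$, and setting $\tilde\Psi := \Psi - \tfrac{W}{2}r^2$ on the half-plane $\Pi = \{(r,z):r>0\}$, the condition becomes $\nabla\bar\xi\parallel\nabla\tilde\Psi$. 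It therefore suffices to produce $(\bar\xi, W)$ with $\bar\xi = (\tilde\Psi - \mu)_+$ for some $\mu\ge 0$; since $s\mapsto (s-\mu)_+$ is Lipschitz, the Lipschitz regularity of $\bar\xi$ will reduce to the $C^1$ regularity of $\tilde\Psi$.

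Fix parameters $W>0$ and $M>0$, and let $\mathcal{G}_a:\xi\mapsto\Psi$ denote the linear Biot--Savart-type operator recovering the Stokes stream function from $\xi$; explicitly $\mathcal{G}_a$ is the positive self-adjoint convolution operator on $L^2(\Pi, r\,dr\,dz)$ induced by $-(-\Delta)^{-a}\nabla\times$ restricted to axisymmetric no-swirl fields $B = r\xi\, e_\theta$, with kernel locally integrable since $2a>1$. I will maximize
\[
\mathcal{J}(\xi) \;:=\; \tfrac{1}{2}\iint_\Pi \xi\,[\mathcal{G}_a\xi]\,r\,dr\,dz \;-\; \tfrac{1}{2}\iint_\Pi \xi^2\,r\,dr\,dz \;-\; \tfrac{W}{2}\iint_\Pi r^2\xi\,r\,dr\,dz
\]
over $\mathcal{A}_M := \{\xi \ge 0 : \iint_\Pi \xi\,r\,dr\,dz = M\}$. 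The quadratic $L^2$-penalty is precisely what yields a Lipschitz (not merely bounded) profile at the Euler--Lagrange stage, while the impulse weight $\tfrac{W}{2}r^2$ localizes mass radially and selects the traveling speed. Existence of a maximizer follows from: Steiner symmetrization in $z$, reducing to sequences symmetric and decreasing in $|z|$; Hardy--Littlewood--Sobolev-type bounds controlling $\mathcal{E}(\xi) := \tfrac{1}{2}\iint\xi\,\mathcal{G}_a\xi\, r\,dr\,dz$ by lower-order $L^p$ norms and the impulse weight; non-triviality $\sup\mathcal{J} > 0$ via a scaled axisymmetric bump trial function centered at some $(r_0, 0)$ with $r_0>0$; and a concentration-compactness argument in the weighted axisymmetric half-plane.

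The Euler--Lagrange equation gives $\bar\xi = \bigl(\mathcal{G}_a\bar\xi - \tfrac{W}{2}r^2 - \mu\bigr)_+ = (\tilde\Psi - \mu)_+$ on $\Pi$, where $\mu$ is the Lagrange multiplier of the mass constraint, non-negative provided $M$ is small enough relative to $W$. This is by construction a traveling wave of speed $W>0$; non-negativity, $L^\infty$-boundedness and non-triviality are immediate from the variational setup. Compact support follows from the maximum principle: $\Psi(0,z)=0$ (axis condition for smooth axisymmetric flows), $\Psi(r,z)\to 0$ as $r^2+z^2\to\infty$, and the radial barrier $\tfrac{W}{2}r^2$, force $\tilde\Psi-\mu<0$ outside a compact subset of $\Pi$. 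For Lipschitz regularity, $B = r\bar\xi\,e_\theta$ is in $L^1\cap L^\infty$, compactly supported, and vanishes on the axis; the $2a$-order smoothing from $(-\Delta)^{-a}$ yields $\Psi \in C^{1,2a-1-\epsilon}_{\mathrm{loc}}$ because $2a>1$, so $\tilde\Psi \in C^1$, and composition with the Lipschitz map $s\mapsto (s-\mu)_+$ gives $\bar\xi$ Lipschitz. I expect the main technical obstacle to be the concentration-compactness step in the weighted half-plane $(\Pi, r\,dr\,dz)$: ruling out vanishing under $|z|\to\infty$ translations and dichotomy of mass in the presence of the fractional Biot--Savart kernel. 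A secondary subtlety is choosing the parameters $(W, M)$ so as to guarantee $\mu\ge 0$, which is needed to rule out support touching the axis.
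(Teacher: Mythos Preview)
Your approach is correct in outline and shares the same Euler--Lagrange structure $\bar\xi=(\tilde\Psi-\mu)_+$ with the paper, but the variational setup is genuinely different. The paper follows Friedman--Turkington: it \emph{constrains the impulse} $\tfrac12\int r^2\xi=\mu$ (with mass only bounded, $\int\xi\le 1$) and maximizes $E[\xi]-\|\xi\|_{L^2}^2$; the traveling speed $W$ then appears as the Lagrange multiplier for the impulse constraint, and the paper must prove $W>0$ a posteriori via the explicit kernel representation of $v^z$ and a symmetrization identity showing $\int v^z\xi>0$. You instead \emph{fix $W>0$} and penalize the impulse in the functional, constraining only the mass; this spares you the $W>0$ argument but transfers the sign problem to the mass multiplier $\mu$. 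The paper also avoids a full concentration--compactness argument: Steiner symmetrization in $z$ plus an explicit tail estimate (their Lemma~3.6, bounding $\int_{\mathbb{R}^3\setminus Q}\xi\,\mathcal G_a[\xi]$ by negative powers of the box parameters) gives enough compactness to pass to the weak limit and recover the energy directly. Your concentration--compactness route should also work, but is heavier than what is needed once symmetrization is in place.

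One point deserves a sharper justification than you give. You write that $\mu\ge 0$ ``provided $M$ is small enough relative to $W$,'' but it is not clear how to make this parameter tuning rigorous, and with an \emph{equality} mass constraint the multiplier has no a priori sign. A cleaner route (and the one the paper uses for its analogue $\gamma\ge 0$) is: once you know $\bar\xi$ is continuous, supported in $\{r\le R\}$, and integrable, send $(r_n,z_n)\to(0,\infty)$; since $\Psi\lesssim r^{1+a}+r^{2a}\to 0$ and $\tfrac{W}{2}r^2\to 0$, you get $\bar\xi(r_n,z_n)\to(-\mu)_+$, which must vanish by integrability, forcing $\mu\ge 0$. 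Alternatively, replacing your equality constraint $\int\xi=M$ by the inequality $\int\xi\le M$ makes $\mu\ge 0$ automatic by complementary slackness, at the cost of checking the constraint is active at the maximizer.
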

The proof is based  on a variational approach. In particular, we shall follow the classical variational setup of Friedman--Turkington \cite[Theorem 2.2]{FT81}, see \S \ref{subsec_variation}.
%\begin{remark} 	For the case $a=1$ (3D axi-symmetric Euler), there have been many existence results of traveling waves including 	\cite{Hill, Norbury72, MR583638, FT81,  Burton_03, Burton_Preciso}. In order to obtain Lipschitz solution for $\frac 1 2 <a<1$, we adopt the setting of Friedman--Turkington \cite[Theorem 2.2]{FT81}. See Subsection \ref{subsec_variation}.\end{remark}

\subsection{Discussion}\label{subsec:motiv}

Let us discuss several motivations to investigate traveling wave solutions of the system \eqref{eq:EH-active}. 

\begin{enumerate}
	\item \textit{Existence and stability of traveling waves}. For the axi-symmetric Euler equations, there is a huge body of literature on the existence and stability of traveling wave solutions, which are often referred to as \textit{vortex rings} (see \cite{Hill, Norbury72, Ni80, FT81,  Burton_03, Burton_Preciso, CWWZ, Choi_hill} and references therein). They correspond to ring-shaped vortex structures which can be observed in nature. Furthermore, such traveling waves can exist with non-zero \textit{swirl} component of the velocity (\cite{swirl1,swirl2,swirl3}), which is interesting since they provide non-trivial examples of global strong solutions to the 3D Euler equations. The proof of existence and stability is usually achieved by a variational argument: after defining an appropriate admissible class with an energy function, the basic strategy is to solve the maximization problem. When a maximum exists, we have a traveling wave, and when the maximum exists uniquely, we obtain its stability. For the two-dimensional case, traveling waves have been studied for instance in \cite{BNL13, Burton2021, CLZ2021,AC_lamb} for the Euler equations and \cite{CQZZ_imrn, CQZZ_jfa} for the generalized SQG equations. 
	
	Since the system \eqref{eq:EH-active} is a simultaneous generalization of 3D Euler and 2D SQG equations, our result on the existence of traveling waves can be considered as a simultaneous extension of existing works as well. \\
	
	%For stability for explicitly written vortices, there have been recent progresses \cite{AC_lamb} for Chaplygin-Lamb dipoles and  \cite{Choi_hill} for Hill's spherical vortices. 
	%Uniformly rotating solutions in 2D Euler equations also can have orbital stability (e.g. Kirchhoff's ellipses \cite {Tang}, Kelvin's $m$-waves \cite{Wan86,CJ_Kelvin}).
	
	\item \textit{Well-posedness issues}. The system \eqref{eq:EH-active} gives rise to challenging local and global well-posedness issues. The pioneering work \cite{ChaeWeng1} obtained finite-time singularity formation for smooth data in the case $a = 0$, assuming well-posedness for axisymmetric data. Remarkably, the axisymmetric system in this case is simply given by the inviscid Burgers equation. While \cite{JO1} proves that the initial value problem for \eqref{eq:EH-active} in the case $a = 0$ is \textit{strongly ill-posed} in Sobolev (and even in any Gevrey) spaces, it does not exclude the possibility that there is a very special type of local well-posedness near the initial data used in \cite{ChaeWeng1}. The ill-posedness is expected to hold as well in the range $a<1/2$, but this seems to be a very technically challenging problem. On the other hand, \cite{CJ1} obtained local well-posedness in sufficiently regular Sobolev spaces when $a\ge1/2$. The uniqueness criterion is improved to the Lipschitz class in the current work, assuming $a>1/2$. 
	\\
	
	\item \textit{Infinite norm growth via instability of traveling waves}. In the past decade, there have been significant progress on the problem of \textit{infinite norm growth}, or in other words, creation of arbitrarily small scales for fluid models (\cite{Den,Den2,Den3,Denisov-merging,KN,KS,Z,Xu,KRYZ,EJ1,JY,JY2,KY,CJ_Kelvin,CJ_perimeter,CJ_Hill,CJ-axi,CJL_half,Tam-axi,EJSVP2,EJE,EJO,Elgindi-3D,ChenHou}). We refer the interested reader to the excellent survey papers \cite{Ki-sur,Ki-sur2,DE}. A common feature of these works is that the instability result is based on a stability statement of the ``background'' solution in a weaker topology. In unbounded domains like $\bbR^2$ and $\bbR^3$, traveling waves constructed by variational principles are perfect candidates for such background flows; they are expected to be stable in some $L^p$ norms of the vorticity, which gives a pointwise control on the velocity perturbation; see \cite{CJ_lamb,CJ_Hill}.
	
	%We can deduce Lagrangian instability from stability results of traveling waves. Indeed, we use $L^p$-stability  at the vortex level, which ensures that  difference in velocity perturbation is uniformly controlled. This uniform small difference in velocity level produces the large difference at   trajectory  level as time goes.
	
	%For instance, there are vortices with infinite perimeter growth (for infinite time) near Chaplygin-Lamb dipoles  \cite{CJ_lamb} and  Hill's spherical vortices  \cite{CJ_Hill}.
	% (also see \cite{CJL_half} for half strip). 
	 %For 2D rotating solutions, we have  finite growth  (see \cite{CJ_perimeter} for disks and  \cite{CJ_Kelvin} for Kelvin's $m$-waves).  
	 
	 Therefore, it is expected that the traveling waves constructed in this work can be applied to produce infinite growth results for smooth solutions to \eqref{eq:EH-active}. 
\end{enumerate}

\subsection{Derivation of axi-symmetric system} We briefly discuss how to arrive at the system \eqref{eq:EH-axisym}. From the following formula for the convective derivative
\begin{equation*}\label{eq:conv-der-axisym}
	\begin{split}
		(V\cdot\nb)B & = \left( v^r \rd_rb^r + v^z\rd_z b^r - r^{-1} v^{\tht}b^{\tht} \right)e_{r}  + \left( v^r \rd_rb^{\tht} + v^z\rd_z b^{\tht} + r^{-1}v^{\tht}b^{r} \right)e_{\tht} + \left(v^r \rd_rb^{z} + v^z\rd_z b^{z}  \right)e_{z} ,
	\end{split}
\end{equation*} we obtain the equation of $b^\tht$ in \eqref{eq:EH-axisym} by taking the $\tht$--component of the equation of $B$ in \eqref{eq:EH-active}. Next, by ``integrating'' the form \eqref{eq:EH}, we obtain the following equation of $U$
\begin{equation*}  
	\begin{split}
		\rd_t U + (\nb \times U) \times V + \nb q = 0,
	\end{split}
\end{equation*} with some scalar-valued function $q$. Then, taking the $\tht$--component of this equation gives the equation of $u^\tht$ in \eqref{eq:EH-axisym}. Next, let us demonstrate the relationship between $V$, $U$, and $B$, which allows us to close the system \eqref{eq:EH-axisym} in terms of $(b^\tht,u^\tht)$. 

\medskip

\noindent \textbf{Relation between $V$ and $U$.} From the relation between ``two velocities"
$V = \Gmm\lap U = - (-\lap)^{1-a} U$, we have 
\begin{equation*}\label{eq:VU}
	\begin{split}
		V(x) = C_{a}\int_{\bbR^{3}} \frac{U(x)-U(y)}{|x-y|^{5-2a}} \, \ud y .
	\end{split}
\end{equation*} Taking the second component, evaluating at $x = (r,0,z)$ and switching to the cylindrical coordinates, \begin{equation*}\label{eq:vtht-utht}
	\begin{split}
		v^{\tht}(r,z) = C_{a} \int_{\bbR} \int_{\bbR^+} \int_{0}^{2\pi}  \frac{ u^{\tht}(r,z) -  u^{\tht}(r',z') \cos(\tht')}{(r^2 - 2rr'\cos(\tht')+ (r')^2 + (z-z')^2 )^{\frac{5}{2}-a}}  r' \, \ud\tht'  \, \ud r' \, \ud z'.
	\end{split}
\end{equation*} Similarly, taking $x = (r,0,z)$ and the first and third components give \begin{equation*}\label{eq:vr-ur}
	\begin{split}
		v^{r}(r,z) = C_{a} \int_{\bbR} \int_{\bbR^+} \int_{0}^{2\pi}  \frac{ u^{r}(r,z) - u^{r}(r',z') \cos(\tht') }{(r^2 - 2rr'\cos(\tht')+ (r')^2 + (z-z')^2 )^{\frac{5}{2}-a}}  r' \, \ud\tht'  \, \ud r' \, \ud z'
	\end{split}
\end{equation*} and \begin{equation*}\label{eq:vz-uz}
	\begin{split}
		v^{z}(r,z) = C_{a} \int_{\bbR} \int_{\bbR^+} \int_{0}^{2\pi}  \frac{ u^{z}(r,z) - u^{z}(r',z') }{(r^2 - 2rr'\cos(\tht')+ (r')^2 + (z-z')^2 )^{\frac{5}{2}-a}}   r' \, \ud\tht'  \, \ud r' \, \ud z',
	\end{split}
\end{equation*} respectively. 

\medskip

\noindent \textbf{Relation between $V$ and $B$.} 
From $V = - (-\lap)^{-a} \nb\times B$, we have \begin{equation}\label{eq:VB}
	\begin{split}
		V(x) = - \frac{C_a}{2(1-a)}\int \frac{x-y}{|x-y|^{-2a+5}}  \times B(y) \, \ud y. 
	\end{split}
\end{equation} In the case $a = \frac12$, this integral must be defined in the sense of principal value. Taking the first component of \eqref{eq:VB} and evaluating at $x = (r,0,z)$, we obtain the following useful relation between $b^\tht$ and $v^r$: \begin{equation*}\label{eq:vr-btht}
	\begin{split}
		v^r(r,z) = \frac{C_a}{2(1-a)}  \int_{\bbR} \int_{\bbR^+} \int_{0}^{2\pi}  \frac{ (z-z')\cos(\tht') b^\tht(r',z') }{(r^2 - 2rr'\cos(\tht')+ (r')^2 + (z-z')^2 )^{\frac{5}{2}-a}}   r' \, \ud\tht'  \, \ud r' \, \ud z' . 
	\end{split}
\end{equation*} 

\subsection*{Organization of the paper}
We shall provide proofs of Theorems \ref{thm:lwp} and \ref{thm_TW_exist} in \S \ref{sec:lwp} and \ref{sec:travel}, respectively.

\section{Local well-posedness}\label{sec:lwp}

\begin{proof}[Proof of Theorem \ref{thm:lwp}]
	
	We divide the proof into a few steps.

	\medskip
	
	\noindent \textbf{1. A priori estimates}. We assume that there is a solution $b^\tht$ to \eqref{eq:EH-axisym-noswirl} satisfying $B = b^\tht e_\tht \in L^\infty([0,T];X^1(\bbR^3))$. To begin with, since $\tilde{v}$ is divergence free in $\bbR^3$, we obtain \begin{equation*}
		\begin{split}
			\frac{d}{dt} \nrm{b^\tht}_{L^1} \le \nrm{  \frac{v^r}{r} b^\tht }_{L^1} \le \nrm{  \frac{v^r}{r} }_{L^\infty} \nrm{ b^\tht}_{L^1}. 
		\end{split}
	\end{equation*} We estimate \begin{equation*}
	\begin{split}
		\nrm{  \frac{v^r}{r} }_{L^\infty} \le C \nrm{\nb V}_{L^\infty} \le C \nrm{B}_{L^\infty}^{1-2a} \nrm{\nb B}_{L^\infty}^{2a} \le C\nrm{B}_{X^1}. 
	\end{split}
\end{equation*} This gives \begin{equation*}
\begin{split}
		\frac{d}{dt} \nrm{b^\tht}_{L^1} \le C\nrm{B}_{X^1}\nrm{ b^\tht}_{L^1}. 
\end{split}
\end{equation*} Similarly, we may obtain that \begin{equation*}
\begin{split}
			\frac{d}{dt} \nrm{b^\tht}_{L^\infty} \le C\nrm{B}_{X^1}\nrm{ b^\tht}_{L^\infty} 
\end{split}
\end{equation*} holds. Next, we write down the equations for $r^{-1} b^\tht$, $\rd_r b^\tht$, and $\rd_z b^\tht$: 
	\begin{equation}\label{eq:rd-tht-b}
		\begin{split}
				\rd_t \frac{b^\tht}{r} + (v^r\rd_r + v^z \rd_z) \frac{ b^\tht}{r} = 0, 
		\end{split}
	\end{equation} \begin{equation}\label{eq:rd-r-b}
	\begin{split}
			\rd_t \rd_r b^\tht +(v^r\rd_r + v^z \rd_z)\rd_r b^\tht =  -\frac{v^r}{r} \frac{b^\tht}{r} + \rd_r v^r \frac{b^\tht}{r} + \frac{v^r}{r} \rd_r b^\tht - \rd_r\widetilde{v} \cdot \widetilde{\nb} b^\tht,
	\end{split}
\end{equation} and \begin{equation*}\label{eq:rd-z-b}
\begin{split}
	\rd_t \rd_z b^\tht + (v^r\rd_r + v^z \rd_z) \rd_z b^\tht =  \rd_z v^r\frac{b^\tht}{r}  + \frac{v^r}{r} \rd_z b^\tht - \rd_z\widetilde{v} \cdot \widetilde{\nb} b^\tht  . 
\end{split}
\end{equation*} Using the fact that $\nrm{\nb B}_{L^\infty} $ is equivalent, up to constants, with \begin{equation*}
\begin{split}
	\nrm{ r^{-1} b^\tht}_{L^\infty} + \nrm{ \rd_zb^\tht}_{L^\infty} +  \nrm{\rd_rb^\tht}_{L^\infty} 
\end{split}
\end{equation*} and estimating the right hand sides of \eqref{eq:rd-tht-b}--\eqref{eq:rd-r-b} gives \begin{equation*}
\begin{split}
	\frac{d}{dt} \left( 	\nrm{\nb B}_{L^\infty} \right) \le C \nrm{\nb V}_{L^\infty} \nrm{\nb B}_{L^\infty} \le C 	\nrm{ B}_{X^1}	\nrm{\nb B}_{L^\infty}.
\end{split}
\end{equation*}  Combining the estimates, we arrive at \begin{equation}\label{eq:apriori-X1}
\begin{split}
	\frac{d}{dt} 	\nrm{ B}_{X^1}\le C	\nrm{ B}_{X^1}^{2}. 
\end{split}
\end{equation} In particular, there is $T>0$ depending only on $\nrm{B_0}_{X^1}$ such that $\nrm{B}_{L^\infty([0,T];X^1)} \le 2 \nrm{B_0}_{X^1}$.

\medskip

\noindent \textbf{2. Existence}. To prove the existence of a solution in $L^\infty([0,T];X^1)$, we consider the sequence of mollified initial data $\left\{ B_0^{\varepsilon} \right\}$. (Here, we can take the convolution with a radial mollifier, which guarantees that the mollified sequence of data is axisymmetric without swirl.) Since each $B_0^\varep$ is smooth and decaying at infinity, there exists a unique corresponding local smooth solution to \eqref{eq:EH-axisym-noswirl} (\cite{CJ1}), which we denote by $B^\varep$. For each $\varep>0$, the a priori estimate \eqref{eq:apriori-X1} can be justified, which gives a uniform time  $T>0$ depending only on $\nrm{B_0}_{X^{1}}$ such that the solution $B^\varep$ remains smooth on $[0,T]$ and satisfies the uniform bound \begin{equation}\label{eq:uniform-X1}
	\begin{split}
		\nrm{B^\varep}_{L^\infty([0,T];X^1)} \le 2\nrm{B_0}_{X^1}. 
	\end{split}
\end{equation} Here, we used that the Lipschitz norm of $B^\varep$ is a blow-up criterion for \eqref{eq:EH-axisym-noswirl}. We can then extract a subsequence, still denoted by $\{ B^\varep \}$, such that $B^\varep$ is locally uniformly convergent in $C([0,T]\times \bbR^3)$. Denoting the limit by $B$, we first see that $B(t=0)=B_0$. Next, from the uniform bound \eqref{eq:uniform-X1} and pointwise convergence, we obtain that \begin{equation*}
\begin{split}
	\nrm{B}_{L^\infty([0,T];X^1)} \le 2\nrm{B_0}_{X^1}. 
\end{split}
\end{equation*} Lastly, using a weak formulation of \eqref{eq:EH-axisym-noswirl}, it is not difficult to see that $B$ is a weak solution to \eqref{eq:EH-axisym-noswirl}. This finishes the proof of existence. 

\medskip

\noindent \textbf{3. Uniqueness}. To prove uniqueness, we assume that there are two solutions $B_1$ and $B_2$ to \eqref{eq:EH-active} which belong to $L^\infty([0,T];X^1)$ for some $T>0$, corresponding to the same initial data $B_0 \in X^1(\bbR^3)$. We need to used the velocity formulation: denote the corresponding velocity equations by \begin{equation*}
	\begin{split}
				\rd_t U_i + (\nb \times U_i) \times V_i + \nb q_i = 0,
	\end{split}
\end{equation*} for $i=1,2$. We write the equation for the difference $D = U_1 - U_2$: \begin{equation*}\label{eq:diff}
\begin{split}
	\rd_t D + (\nb \times D) \times V_1 + (\nb \times U_2) \times (V_1-V_2) + \nb (q_1-q_2)=0.  
\end{split}
\end{equation*} Taking the dot product of both sides with $D$ and integrating, \begin{equation*}
\begin{split}
		\frac12 \frac{d}{dt} \nrm{D}_{L^2}^2 = I + II + III, 
\end{split}
\end{equation*} with \begin{equation*}
\begin{split}
	I = - \int ((\nb \times D) \times V_1 ) \cdot D,
\end{split}
\end{equation*}\begin{equation*}
\begin{split}
	II = - \int (  (\nb \times U_2) \times (V_1-V_2) ) \cdot D, 
\end{split}
\end{equation*}and \begin{equation*}
\begin{split}
	III = -\int \nb (q_1-q_2) \cdot D = 0. 
\end{split}
\end{equation*} Note that \begin{equation*}
\begin{split}
	I = \int (D \times (\nb \times D)) \cdot V_1, 
\end{split}
\end{equation*} and the pointwise identity \begin{equation*}
\begin{split}
	D \times (\nb \times D) =\sum_{i\ne j}  \left( \frac12\left( \rd_i(D_j^2) \right) - \rd_i(D_iD_j) + D_j \rd_iD_i \right) . 
\end{split}
\end{equation*} The last term can be written as \begin{equation*}
\begin{split}
	\sum_{i\ne j} D_j(\rd_i D_i) = \sum_{i,j} D_j(\rd_i D_i) - \sum_j \frac12 \rd_i (D_i)^2 =  - \sum_j \frac12 \rd_i (D_i)^2
\end{split}
\end{equation*} using $\sum_i \rd_iD_i=0$. Therefore, we can integrate by parts to obtain \begin{equation*}
\begin{split}
		\left|I\right| \le C\nrm{\nb V_1}_{L^\infty} \nrm{D}_{L^2}^2. 
\end{split}
\end{equation*} Next, we rewrite $II$ as \begin{equation*}
\begin{split}
	II = \int ( B_2 \times (-\lap)^{1-a}D) \cdot D
\end{split}
\end{equation*} by recalling $B_2= \nb \times U_2$. This can be further written as \begin{equation*}
\begin{split}
	II = \frac12 \int [ B_{2}\times, (-\lap)^{1-a}] D \cdot D,
\end{split}
\end{equation*} and we see that \begin{equation*}
\begin{split}
	[ B_{2}\times, (-\lap)^{1-a}] D \, (x) = C_a \int \frac{(B_{2}(x)-B_{2}(y))}{|x-y|^{3+2(1-a)}} \times D(y) dy. 
\end{split}
\end{equation*} To estimate this function, we introduce a cutoff $0 \le \chi(|x-y|) \le 1$ which is equal to 1 for $|x-y|<R$, 0 for $|x-y|>2R$ for some $R>0$ to be determined. Then, we estimate \begin{equation*}
\begin{split}
	\left| \int \chi(|x-y|)  \frac{(B_{2}(x)-B_{2}(y))}{|x-y|^{3+2(1-a)}} \times D(y) dy\right| \le C\nrm{\nb B_{2}}_{L^\infty} \int \frac{ \chi(|x-y|)}{|x-y|^{3+2(1-a)-1}} |D(y)| dy,
\end{split}
\end{equation*} using the mean value theorem for $B_{2}$. Then, the kernel is integrable, and we obtain that \begin{equation*}
\begin{split}
	\left\Vert \int \frac{ \chi(|x-y|)}{|x-y|^{3+2(1-a)-1}} |D(y)| dy \right\Vert_{L^2_x} \le C R^{2a-1}  \nrm{D}_{L^2}
\end{split}
\end{equation*} using Young's inequality. Next, we estimate \begin{equation*}
\begin{split}
	\left| \int \frac{(B_{2}(x)-B_{2}(y))}{|x-y|^{3+2(1-a)}} \left( 1 - \chi(|x-y|) \right) \times D(y) dy \right| \le C\nrm{B_{2}}_{L^\infty} \int \frac{ 1- \chi(|x-y|)}{|x-y|^{3+2(1-a)}} |D(y)| dy
\end{split}
\end{equation*}and  \begin{equation*}
\begin{split}
	\left\Vert \int \frac{ 1- \chi(|x-y|)}{|x-y|^{3+2(1-a)}} |D(y)| dy \right\Vert_{L^2_x} \le CR^{2a-2} \nrm{D}_{L^2}. 
\end{split}
\end{equation*} Combining the estimates and choosing $R = \nrm{B_{2}}_{L^\infty}/\nrm{\nb B_{2}}_{L^\infty} $, we conclude that \begin{equation*}
\begin{split}
	\nrm{ [ B_{2}\times, (-\lap)^{1-a}] D}_{L^2} \le C_a \nrm{\nb B_{2}}_{L^\infty}^{2-2a} \nrm{B_{2}}_{L^\infty}^{2a-1} \nrm{D}_{L^2}. 
\end{split}
\end{equation*} Therefore, using the bounds for $I$ and $II$, \begin{equation*}
\begin{split}
	\left| 	\frac12 \frac{d}{dt} \nrm{D}_{L^2}^2  \right| \le C_a\left(  \nrm{\nb V_1}_{L^\infty} + \nrm{\nb B_{2}}_{L^\infty}^{2-2a} \nrm{B_{2}}_{L^\infty}^{2a-1}    \right) \nrm{D}_{L^2}^2 \le C_a (\nrm{B_1}_{X^1} + \nrm{B_2}_{X^1} ) \nrm{D}_{L^2}^2. 
\end{split}
\end{equation*} Using Gronwall's inequality with $D(t=0)=0$ gives $\nrm{D}_{L^2}(t)=0$, which concludes the proof of uniqueness. \end{proof}

\section{Existence of traveling waves}\label{sec:travel}

In this section, we prove existence of Lipschitz  traveling wave solutions of \eqref{eq_xi_evol}. In the sequel, $C_a$ is some positive constant depending only on the value of $a$.
\subsection{Stream function and energy}

For 
$B(\bbx) =  b^{\tht} (r,z) e_{\tht}  $, the corresponding vector stream $\Phi := (-\lap)^{-a}   B
$ has the form
$$
\Phi(\bbx)=C_a\int_{\mathbb{R}^3}\frac{1}{|\bbx-\bby|^{3-2a}} B(\bby) \ud \bby=\phi^\tht(r,z)e_\tht(\tht).
$$ We define the (scalar) stream $\psi$ and the (relative) vorticity $\xi$ by $$\psi(r,z):=r\phi^\tht(r,z)\quad\mbox{and}\quad  \xi(r,z)=b^\tht(r,z)/r.$$ Then we have for $\Pi:=\{(r,z)\in\mathbb{R}^2\,:\, r>0\},$
 \begin{equation}\label{eq:psi-origin}
	\begin{split}
		\psi(r,z) &=  C_a \int_{\Pi}
		    \left[\int_0^{\pi}  \frac{r\bar{r}\ct}{\left({r^2-2r\bar{r}\ct + \bar{r}^2 + (z-\bar{z})^2}\right)^{(3-2a)/2}}  \ud\tht  \right]  
		  \xi(\bR,\bz) \bR\, \ud \bR \ud \bz\\
		  &=: C_a \int_{\Pi}
		    \left[G_a(r,z,\bR,\bz)  \right]  
		  \xi(\bR,\bz) \bR\, \ud \bR \ud \bz=:\mathcal{G}_a[\xi] .  
	\end{split}
\end{equation}  
Denoting \begin{equation*}
	\begin{split}
		s(r,\bR,z,\bz) := \left(\frac{(r-\bR)^2 + (z-\bz)^2}{r\bR}\right) 
	\end{split}
\end{equation*} gives 
\begin{equation}\label{eq:psi-F}
	\begin{split}
		\psi(r,z)  
		  &= C_a \int_{\Pi}
		    \left[ {(r\bR)^{ a-\frac 1 2}}  \calF_a(s) \right]  
		  \xi(\bR,\bz) \bR\, \ud \bR \ud \bz 
	\end{split}
\end{equation} with
\begin{equation}\label{eq:F}
\begin{split}
	\calF_a(s) & := 
% {(r\bR)^{ \frac 1 2 -a}}	G_a(r,z, \bR,\bz) 
	 \int_0^\pi \frac{\ct}{(2(1-\ct)+s)^\frac{3-2a}{2}} \, \ud \tht> 0\quad\mbox{for}\quad s>0. 
\end{split}
\end{equation}
%Here we set  $\mathcal{F}_a$ by\begin{equation*}\label{eq:G}\begin{split}	G_a(r,z, \bR,\bz) = {(r\bR)^{ a-\frac 1 2}}  \calF_a(s) . \end{split}\end{equation*}
% We also note $G_a$ is symmetric:$$G_a(\bbx,\bar\bbx)=G_a(r,z,\bR,\bz)=G_a(\bR, \bz,r,z)=G_a(\bar\bbx,\bbx).$$
We have the following elementary estimate   on $\calF_a:\mathbb{R}_{>0}\to \mathbb{R}_{>0}$ for $0\leq a<1$;
\begin{equation}\label{est_F}
		\begin{split}
			\calF_a(s) \lesssim_{\tau,a} s^{-\tau}, \qquad \tau\in[1-a,(5/2) -a]. 
		\end{split}
	\end{equation} Indeed, it can be proved by modifying the argument of \cite{FeSv}, which covers the case $a=1$ (the axisymmetric Euler).\\
%	 (cf. When $a=1$, it is well-known that $\tau$ is required to be on	the range $(0,3/2]$ by \cite{FeSv}.) \Red{good to include a proof}\\
	
		 The velocity  $
V=v^re_r+v^ze_z
$ is given by \begin{equation}\label{eq:u}
\begin{split}
	v^r (r,z)= - \frac{1}{r} \rd_{z} \psi (r,z), \qquad 	v^z (r,z)=  \frac{1}{r} \rd_{r} \psi (r,z),
\end{split}
\end{equation} and one may check directly the divergence free condition $\rd_r (r v^r) + \rd_z (r v^z) = 0.$
If we interpret $\xi$ as an axi-symmetric function on $\mathbb{R}^3$ (\textit{i.e.} $\xi(\bbx)=\xi(r,z)$), it solves 
\begin{equation*}\label{eq_xi_tranp}
\begin{split}
\partial_t \xi + V\cdot \nabla_{\mathbf{x}} \xi=0,
\end{split}
\end{equation*} which gives the fact that any $L^p(\mathbb{R}^3)$ norm is preserved: $\|\xi(t)\|_{L^p(\mathbb{R}^3)}=\|\xi_0\|_{L^p(\mathbb{R}^3)},$ for any $t\ge0$.  In addition,
we have   conservation of the impulse:
$$
\int_{\mathbb{R}^3}r^2\xi(t,\bbx) \ud\bbx=\int_{\mathbb{R}^3}r^2\xi_0(\bbx) \ud\bbx.
$$
%\begin{comment}
From \eqref{eq:psi-F} and \eqref{eq:F}, we obtain   expressions  \begin{equation*}\label{eq:ur-G}
\begin{split}
	v^r(r,z)  =-\frac 1 r \partial_z\psi(r,z)=C_a \int_{\Pi}  \frac{\bz-z}{  r  (r\bR)^{\frac32-a}} \calF'(s)\xi(\bR,\bz) \bR\, \ud \bR \ud \bz,  
\end{split}
\end{equation*}\begin{equation}\label{eq:uz-G}
\begin{split}
	v^z(r,z) =\frac 1 r \partial_r\psi(r,z)=C_a  \int_{\Pi} \left(  \frac{2(r-\bR)}{  r  (r\bR)^{\frac32-a}} \calF'(s) + \frac{(r\bR)^{a-\frac12}}{ r^2}\left[(a-\frac 12) \calF(s)-s \calF'(s) \right] \right) \xi(\bR,\bz) \bR\, \ud \bR \ud \bz. 
\end{split}
\end{equation} 
%\end{comment}
 We define the
 energy of the flow \begin{equation}\label{defn_en}
\begin{split}
	E[\xi] &:= 
	 \int_{\mathbb{R}^3} \psi(\bbx) \xi(\bbx) \ud\bbx =
	C_a   \int_{\Pi}\int_{\Pi}
		    \left[ {(r\bR)^{ a-\frac 1 2}}  \calF_a(s) \right]  
		  \xi(\bR,\bz) \bR\, \ud \bR \ud \bz \, \xi(r,z)r \, \ud r \ud z. 
\end{split}
\end{equation} 
 
%We remark that $0\le E<\infty $ for any axi-symmetric  $\xi \in L^1\cap L^\infty(\mathbb{R}^3)$ when $r^2\xi\in  L^1(\mathbb{R}^3)$ (refer to Lemmas \ref{lem_est_energy}, \ref{lem_est_posi}).  
 
\subsection{Variational framework and key proposition}\label{subsec_variation}
%From now on, we always assume the condition on $a$: $$\frac 1 2 < a <1.$$  

We fix some $\frac12<a<1$ and adopt the variational setting of Friedman--Turkington \cite{FT81}: For $\mu>0$,  we consider the \textit{admissible class} \begin{equation*}\label{defn_adm_cla}
\begin{split}
	K_\mu&:=\left\{
\xi\in L^2(\mathbb{R}^3)\quad|\quad \xi=\xi(r,z)\geq 0,\quad \frac 1 2 \int_{\mathbb{R}^3}r^2\xi(\bbx)\ud\bbx=\mu,\quad \int_{\mathbb{R}^3}\xi(\bbx)\ud \bbx\leq 1
	\right\},\end{split}
\end{equation*} and try to maximize the quantity
 \begin{equation*}\label{defn_mod_en}
\begin{split}
	E_2[\xi] &:=E[\xi]-\int_{\mathbb{R}^3}\xi^2 \ud\bbx.
	\end{split}
\end{equation*} 
We define the maximum value and the maximizer set respectively by 
 \begin{equation*}%\label{defn_mod_en}
\begin{split}
	\mathcal{I}_\mu &:=\sup_{\xi\in K_\mu}E_2[\xi], \qquad S_\mu:=\{
	\xi\in K_\mu\quad|\quad E_2[\xi]=I_\mu	
	\}.
	\end{split}
\end{equation*}  
%and $S_\mu$ is the set of maximizers in the class $K_\mu$.

%\begin{proposition}\label{prop_nonem} For each $\mu>0$,  there exists $\zeta\in S_\mu$ satisfying $$ \zeta(x)=\zeta(r,z)=\zeta(r,-z).$$\end{proposition}

We now introduce the key proposition, which implies Theorem \ref{thm_TW_exist}. 
\begin{proposition}\label{prop_prop}
Let $\mu>0$. There exists  $\xi\in S_\mu$ such that there exists a unique pair $(W,\gamma)\in \mathbb{R}^2$ with $W>0$, $\gamma\geq 0$ satisfying  
\begin{equation}\label{rel_xi_psi}
\xi=(\Psi)_+,%,\quad \Psi(\bbx):=\psi(\bbx)-\frac 1 2 W r^2-\gamma,\quad \psi:=\mathcal{G}_a[\xi].
\end{equation}
where  $$\Psi(\bbx):=\psi(\bbx)-\frac 1 2 W r^2-\gamma\quad\mbox{and}\quad  \psi:=\mathcal{G}_a[\xi].$$
Moreover, $\xi$ is non-negative, bounded, Lipshitz continuous, and compactly supported in $\mathbb{R}^3$, and for each $r>0$, $\xi(r,\cdot_z)$ is nonincreasing as a function of $z>0$ and 
$\xi(r,z)=\xi(r,-z)$.
 \end{proposition}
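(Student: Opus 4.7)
The plan is to follow the Friedman--Turkington variational scheme: first establish that the supremum $\mathcal{I}_\mu$ is finite and attained, then derive \eqref{rel_xi_psi} as the Euler--Lagrange equation, and finally read off regularity, symmetry, and compact support from this identity.

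First, using \eqref{est_F} with a suitable choice of $\tau\in[1-a,5/2-a]$, I would split the kernel $(r\bR)^{a-1/2}\calF_a(s)$ in \eqref{eq:psi-F} into a near-diagonal piece (handled by Young's inequality in the $L^2$ variable) and a far-field piece (controlled using the impulse constraint and $\int\xi\leq 1$) to get a bound of the form $E[\xi]\leq C_{a,\mu}\|\xi\|_{L^2}^{\beta}$ with some $\beta\in(0,2)$. This makes $E_2$ bounded above on $K_\mu$ and hence $\mathcal{I}_\mu<\infty$, while non-triviality $\mathcal{I}_\mu>0$ follows by testing $E_2$ on an appropriately concentrated bump carrying impulse $\mu$ and mass $<1$. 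Given a maximizing sequence $\{\xi_n\}\subset K_\mu$, I would Steiner-symmetrize each $\xi_n$ in $z$ for each fixed $r$: since $\calF_a$ is strictly decreasing in $s$ and $s$ is strictly increasing in $|z-\bz|$, this operation preserves all $L^p$-norms and the impulse while not decreasing $E$, and yields the symmetry $\xi_n(r,z)=\xi_n(r,-z)$ together with monotonicity in $|z|$. The penalization $-\|\xi\|_{L^2}^2$ forces $\|\xi_n\|_{L^2}$ to stay bounded, so a weak-$L^2$ limit $\xi^\ast$ exists along a subsequence. The monotonicity in $|z|$, combined with the impulse bound and the decay of the kernel at infinity, confines the dominant portion of the $\xi_n$ to a bounded region of the $(r,z)$ half-plane; a standard Rellich-type compactness argument then upgrades weak convergence to $E[\xi_n]\to E[\xi^\ast]$. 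Weak lower semicontinuity of $\|\cdot\|_{L^2}^2$ and Fatou on the constraints give $\xi^\ast\in K_\mu$ with $E_2[\xi^\ast]=\mathcal{I}_\mu$.

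Second, I would take admissible first variations of $E_2$ at $\xi^\ast$. For test functions supported in $\{\xi^\ast>0\}$ we may take two-sided variations, and for non-negative test functions supported in $\{\xi^\ast=0\}$ only one-sided ones, provided we offset to preserve both constraints. Standard Kuhn--Tucker reasoning then produces constants $W\in\bbR$ and $\gamma\geq 0$ (with $\gamma=0$ if $\int\xi^\ast<1$) such that
\begin{equation*}
\psi-\xi^\ast-\tfrac12 W r^2-\gamma=0\ \text{on}\ \{\xi^\ast>0\},\qquad \psi-\tfrac12 W r^2-\gamma\leq 0\ \text{on}\ \{\xi^\ast=0\},
\end{equation*}
which is precisely $\xi^\ast=(\Psi)_+$. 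Uniqueness of $(W,\gamma)$ is immediate: $\psi$ is determined by $\xi^\ast$, and the identity $\psi-\tfrac12 Wr^2-\gamma=\xi^\ast$ on a set of positive measure containing distinct values of $r$ fixes both constants. The positivity $W>0$ follows because $\psi(\bbx)\to 0$ as $|\bbx|\to\infty$ (by \eqref{est_F}): if $W\leq 0$, then $\Psi$ is asymptotically non-negative, so $(\Psi)_+$ could not be compactly supported nor lie in $L^2\cap L^1$, contradicting $\xi^\ast\in K_\mu$.

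Finally, for the qualitative properties: the symmetrization step already gives $\xi^\ast(r,z)=\xi^\ast(r,-z)$ and $z$-monotonicity. Non-negativity of $\psi$ (from positivity of $\calF_a$) and $\xi^\ast\leq\psi$ on $\{\xi^\ast>0\}$, combined with a short kernel bootstrap based on \eqref{est_F}, give $\xi^\ast\in L^\infty$; differentiating \eqref{eq:psi-origin} and estimating via \eqref{est_F} with $\tau$ slightly larger than $1-a$ yields $\nb\psi\in L^\infty$ with bound controlled by $\|\xi^\ast\|_{L^\infty}$, whence $\Psi$ and $\xi^\ast=(\Psi)_+$ are Lipschitz. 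Compact support follows from $\psi\to 0$ at infinity, $\tfrac12 Wr^2\to\infty$ as $r\to\infty$ (since $W>0$), and monotonicity in $|z|$. The main obstacle I anticipate is the compactness step in paragraph two, namely ruling out loss of mass at $r\to\infty$ and at $|z|\to\infty$ along a maximizing sequence; the Steiner symmetrization in $z$ together with the impulse constraint in $r$ are precisely what is needed to prevent both possibilities and to pass to the limit inside the non-local quadratic functional $E$.
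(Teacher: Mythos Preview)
Your overall strategy matches the paper's (Friedman--Turkington scheme, Steiner symmetrization in $z$, energy concentration for compactness, Euler--Lagrange variation), but two specific steps have genuine gaps.

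First, the claim that ``Fatou on the constraints gives $\xi^\ast\in K_\mu$'' fails: $K_\mu$ carries the \emph{equality} $\tfrac12\int r^2\xi=\mu$, and weak $L^2$ convergence together with Fatou only yields $\tfrac12\int r^2\xi^\ast\leq\mu$. The paper handles this by first maximizing over the relaxed class $K_\mu'=\{\xi\geq0:\ \tfrac12\int r^2\xi\leq\mu,\ \int\xi\leq1\}$, which is weakly closed, and then showing any maximizer saturates the impulse: if $\tfrac12\int r^2\xi^\ast<\mu$, the radial translation $\xi_\tau(r,z)=\tfrac{r-\tau}{r}\,\xi^\ast(r-\tau,z)$ preserves mass and $L^2$ norm but strictly increases $E$ (since $G_a(r+\tau,z,\bar r+\tau,\bar z)>G_a(r,z,\bar r,\bar z)$ by monotonicity of $\calF_a$), and lands in $K_\mu$ for suitable $\tau>0$, contradicting maximality. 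Your outline has no device to recover the equality.

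Second, your argument for $W>0$ only excludes $W<0$ (where indeed $\Psi\to+\infty$ as $r\to\infty$). If $W=0$ and $\gamma>0$ then $\xi^\ast=(\psi-\gamma)_+$, and nothing you wrote forces $\{\psi>\gamma\}$ to have infinite measure; in particular the assertion ``$\psi(\bbx)\to0$ as $|\bbx|\to\infty$'' is not available at this stage---the estimate \eqref{est_psi} gives $|\psi|\lesssim r^{1+a}+r^{2a}$, which controls the small-$r$ and large-$r$ behaviour but says nothing about decay in $|z|$. The paper instead proves $W>0$ by a direct computation: integrating $\partial_r\xi$ against $\xi$ on $\{\xi>0\}$ gives $W\!\int\xi=\int v^z\xi$, and the representation \eqref{eq:uz-G} makes each contribution to $\int v^z\xi$ strictly positive because $\calF_a>0$, $\calF_a'<0$, and $a>\tfrac12$. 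Only after $W>0$ is secured does the paper deduce $\mathrm{spt}\,\xi\subset\{r\leq R\}$, then $\xi\in L^\infty$, then Lipschitz continuity via $\Phi=(-\Delta)^{-a}B\in C^{1,\alpha}$, and finally compact support in $z$ from the decay $\Phi/r\to0$; your ordering and the blanket ``$\psi\to0$'' would need to be replaced by this chain.
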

 
\begin{remark}
\begin{enumerate}
\item

 The relation \eqref{rel_xi_psi} implies 
$$
\nabla^{\perp}_{r,z}\Psi\cdot \nabla_{r,z}\xi=0.
$$ Thus  the above proposition with \eqref{eq:u} guarantees the existence of  $\xi$ satisfying
$$v^r\partial_r\xi+(v^z-W)\partial_z\xi=0$$
 As a result,  $\xi(\bbx-Wte_z)$ 
%These propositions give
is 
 a traveling wave solution
 of \eqref{eq_xi_evol} for %which required 
 Theorem \ref{thm_TW_exist}. Lipschitzness of $\xi$  follows from the relation \eqref{rel_xi_psi}
since both $\psi=\mathcal{G}_a[\xi]$ (for $a>1/2$) and the map $s\mapsto s_+$ are Lipschitz.
 \item
 In order to get   more regular $\xi$ than Proposition \ref{prop_prop} does, one has to consider more regular vorticity function (e.g. see \cite{CQZZ_jfa} for 2D generalized SQG equations). Indeed, in the proof of the proposition, we used Lipschitz vorticity $f(s)=s^+$. Instead, we may use smoother $f$ such as $f(s)=(s^+)^{1+\varepsilon}, \varepsilon>0$.
\item 
If one just want to prove existence of traveling waves for $a\leq \frac 1 2 $, then one may follow the approach of \cite{CQZZ_imrn} where the authors proved the existence of traveling dipoles for 2D generalized SQG equations.   However, when $a\leq \frac 1 2$, % it is natural to expect that the stream $\psi$ is less regular than $C^1$. Thus
 it is a difficult question whether the obtained $\xi$ is Lipschitz or not since regularity of the stream $\Phi=(-\Delta)^{-a}B$ may not be Lipschitz. 
\end{enumerate}

 \begin{comment}
\begin{enumerate}
\item 
\item We may use scaling for  $\nu,\lambda\in(0,\infty)$ by setting
$$	E^\lambda_2[\xi]:=E[\xi]-\frac{1}{\lambda}\int_{\mathbb{R}^3}\xi^2 \ud\bbx
$$ and
$$
	K_{\mu,\nu}:=\{
\xi\in L^2(\mathbb{R}^3)\quad|\quad \xi=\xi(r,z)\geq 0,\quad \frac 1 2 \int_{\mathbb{R}^3}r^2\xi(\bbx)\ud\bbx=\mu,\quad \int_{\mathbb{R}^3}\xi(\bbx)\ud \bbx\leq \nu
	\}.
	$$

\item Instead of using $L^2$-norm, we may use  any $p \in(p_0(a),\infty)$ power-norm (for some $p_0(a)>1$):
 $$	E_p[\xi]:=E[\xi]- \frac{1}{p-1}\int_{\mathbb{R}^3}\xi^p \ud\bbx.	
$$ One may prove the existence of a maximizer satisfying 
$$
\xi=\left(\frac{(\Psi)_+}{p/(p-1)}\right)^{1/(p-1)}.
$$
Passing to the limit $p\to\infty$ will give a patch traveling wave as in \cite{FT81}.

\end{enumerate}
\end{comment}
\end{remark}

\subsection{Elementary estimates}
%\Red{Let $ \frac 1 2<  a <1$.}
We first estimate the stream $\psi$ for  $\xi\in   L^{2} \cap L^1 $.
\begin{lemma}
%[Proposition 2.1 in  \cite{AC2019}]
\label{lem_est_stream}
%The estimates 

For   axi-symmetric   $\xi\in \left(   L^{2} \cap L^1\right)(\mathbb{R}^3)$, the   stream function $\psi=\mathcal{G}_a[\xi]$ satisfies  
\begin{equation}\label{est_psi}
|\psi(r,z)|
\lesssim_a  r^{1+a}
   \|\xi\|_{L^{3/(a+1)}(\mathbb{R}^3) } +   r^{2a}  \|\xi\|_{ L^{3/2}(\mathbb{R}^3)},\quad \forall (r,z)\in\Pi.
   \end{equation}
 %  Moreover,\Red{probably we do not need the estimate below. in fact it's meaningless for $a<5/4$} \begin{equation}\label{est_psi_bdd}|\psi(r,z)|\lesssim_{a,\varepsilon} \left(r^{-(5a-4-7\varepsilon)}+r^{-(3-2a)}\right) (   \|\xi\|_{L^{1}\cap L^2(\mathbb{R}^3) } +     \|r^2\xi\|_{ L^1(\mathbb{R}^3)})     \end{equation} for each $(r,z)\in\Pi$ and for each $0<\varepsilon\leq a-\frac 1 2$.  

%\Red{ $1/r^?$ estimate is needed ? will be needed anyway}
\end{lemma}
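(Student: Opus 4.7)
The plan is to bound the kernel $G_a(r,z,\bR,\bz) = (r\bR)^{a-1/2}\calF_a(s)$ in \eqref{eq:psi-F} pointwise via \eqref{est_F} and then apply H\"older's inequality on $\Pi$ with the measure $d\mu := \bR\,d\bR\,d\bz$, which for axi-symmetric integrands is $(2\pi)^{-1}$ times the three-dimensional Lebesgue measure. Writing $\rho := \sqrt{(r-\bR)^2 + (z-\bz)^2}$, the inequality $\calF_a(s) \lesssim_{\tau,a} s^{-\tau}$ translates, upon recalling $s = \rho^2/(r\bR)$, to $G_a \lesssim (r\bR)^{a-1/2+\tau}\rho^{-2\tau}$. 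I split $\Pi$ into a near region $A = \{\rho \le r\}$ and a far region $B = \{\rho > r\}$, using $\tau = 1-a$ on $A$ and $\tau = 5/2 - a$ on $B$.

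On $A$, the constraint $|\bR - r| \le \rho \le r$ forces $\bR \le 2r$, so $\tau = 1-a$ yields $G_a \lesssim r\,\rho^{-(2-2a)}$. H\"older in $d\mu$ with conjugate exponents $p = 3/(a+1)$ and $q = 3/(2-a)$ gives $\int_A G_a\,|\xi|\,d\mu \lesssim r\,\|\xi\|_{L^{3/(a+1)}(\mathbb{R}^3)}\,\bigl(\int_A \rho^{-(2-2a)q}\,d\mu\bigr)^{1/q}$. Bounding $\bR \le 2r$ in the remaining integral and switching to polar coordinates centered at $(r,z)$ reduces the radial part to $r\int_0^r \rho^{1-(2-2a)q}\,d\rho$; this converges precisely under $a > 1/2$ (equivalent to $(2-2a)q < 2$) and contributes a factor $\simeq r^{3/q-(2-2a)} = r^a$. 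Thus the near region yields the first term $\lesssim r^{1+a}\|\xi\|_{L^{3/(a+1)}}$.

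On $B$, the choice $\tau = 5/2 - a$ gives $G_a \lesssim r^2\bR^2\,\rho^{-(5-2a)}$. H\"older with $p = 3/2$ and $q = 3$ produces $\int_B G_a\,|\xi|\,d\mu \lesssim r^2\,\|\xi\|_{L^{3/2}(\mathbb{R}^3)}\,\bigl(\int_B \bR^7\,\rho^{-3(5-2a)}\,d\bR\,d\bz\bigr)^{1/3}$. Bounding $\bR \le r + \rho$ so that $\bR^7 \lesssim r^7 + \rho^7$ and again using polar coordinates centered at $(r,z)$, the computation reduces to the two radial integrals $r^7\int_r^\infty \rho^{1-3(5-2a)}\,d\rho$ and $\int_r^\infty \rho^{8-3(5-2a)}\,d\rho$. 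Both converge when $a < 1$, and each contributes a common factor $\simeq r^{6a-6}$; the final contribution is $\lesssim r^2\cdot r^{2a-2}\,\|\xi\|_{L^{3/2}} = r^{2a}\,\|\xi\|_{L^{3/2}}$, which is the second term.

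The main obstacle is ensuring that the two integrability thresholds---at $\rho \to 0$ on $A$ and at $\rho \to \infty$ on $B$---can both be satisfied by the chosen pair $(\tau_A, \tau_B)$ and the target norms $L^{3/(a+1)}$ and $L^{3/2}$. These thresholds are saturated exactly at $a = 1/2$ and $a = 1$ respectively, which explains both why the interpolated family of bounds \eqref{est_F} is needed (a single $\tau$ would fail) and why the range $\tfrac 12 < a < 1$ considered in the paper is the natural one for this estimate.
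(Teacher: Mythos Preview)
Your argument is correct and follows essentially the same route as the paper: split at $\rho\sim r$, apply \eqref{est_F} with $\tau=1-a$ near and $\tau=5/2-a$ far, then H\"older with the indicated exponents. The only cosmetic differences are that the paper cuts at $t<r/2$ rather than $\rho\le r$, and in the far region absorbs $\bR^{2}\lesssim t^{2}$ into the kernel \emph{before} H\"older (reducing it to $r^{2}t^{-(3-2a)}$) instead of carrying $\bR^{7}$ through and bounding it by $r^{7}+\rho^{7}$ afterward; both orderings give the same $r^{2a}$ factor.
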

 
\begin{proof}
  
  %Fix $a\in(1/2,1]$. Then there is $\tau<1/2$ which is valid on $\eqref{est_F}$ for the given $a$.

%Let $p\geq 1$.  
We decompose 
%the representation \eqref{form_psi}  can be written by
\begin{equation*}\label{est_decom_psi}
|\psi(r,z)|\lesssim 
 \int_\Pi
G_a(r,z,r',z') 
|\xi(r',z')|r'dr'dz'
=\int_{t<r/2}+\int_{t\geq r/2}=:I+II,
\end{equation*}
 for 
$  t:%=t(r,z,r',z'):
=\sqrt{(r-r')^2+(z-z')^2}.$
%In particular $F:\mathbb{R}_+\rightarrow \mathbb{R}_+$ is decreasing with the estimate\begin{equation}\label{est_F_sp}F(s)\leq \frac{C_p}{s^{1/2p}}, \quad F(s)\leq \frac{C}{s^{3/2}},\quad s>0\end{equation}   (see \eqref{est_F}). We may assume $\xi\geq 0$.\\We split the integral\begin{equation*}\begin{split}
%\psi(r,z)=\mathcal{G}[\xi](r,z) = \int_\Pi\left(\frac{\sqrt{rr'}}{2\pi}\,F(\frac{t^2}{rr'})\right)\xi(r',z')r'dr'dz'=\int_{t<r/2}+\int_{t\geq r/2}=:(I)+(II).\end{split}\end{equation*}
  For the term $I$,  by using  \eqref{est_F} with 
$\tau=1-a$,  and by taking any $p>1$ satisfying $\tau p <1$,  H\"older's inequality
with $(1/p)+(1/p')=1$
 implies that %\Red{need $2\tau p <2$ i.e. $\tau p <1$}
\begin{equation*}\begin{split}\label{est_stream_1} 
I&\lesssim \int_{t<r/2}
\frac{(rr')^{a-\frac 1 2 +\tau}}{t^{2\tau}}
%{\left(|r-r'|^2+|z-z'|^2\right)^{\frac 1 {2p}}}
 |\xi(r',z')|r' dr'dz' \lesssim  \left(\int_{t<r/2}  
 \frac{(rr')^{p(a-\frac 1 2 +\tau)}}{t^{2\tau p}}
 % {\left(|r-r'|^2+|z-z'|^2\right)^{1/2}}
 r' dr'dz'\right)^{1/p}  \|\xi1_{\{t<r/2\}}\|_{L^{p'}(\mathbb{R}^3)}
 \\ &\lesssim r^{2a-1+\frac 3 p }  \|\xi1_{\{t<r/2\}}\|_{L^{p'}(\mathbb{R}^3)},
\end{split}\end{equation*}  where in the last inequality we used $r\sim r'$.
Thus by setting
$\frac 1 p= 1-a+\varepsilon$ for any $\varepsilon\in(0, a-\frac 1 2]$ 
%  $p=\frac 3 {3-2a}>1$ and taking any  $\tau\in(1-a,\frac {3-2a}{3})$,
   we have 
$$
I \lesssim \ 
 r^{2-a+3\varepsilon  }  \|\xi1_{\{t<r/2\}}\|_{L^{1/(a-\varepsilon)}(\mathbb{R}^3)}.
$$ We can set %$\varepsilon=\frac 1 2 \min\{\frac a 3, a-\frac 1 2\}$, 
$\varepsilon:=\frac 1 3(2a-1)$ so that $0<\varepsilon<\min\{\frac a 3, a-\frac 1 2\}$ whenever $\frac 1 2 <a <1$. It gives the first term in \eqref{est_psi}.

 For the term $II$, we take $
\tau=\frac 5 2 -a $ in \eqref{est_F}. Since $r'\leq 3t$ for $t\geq r/2$, we obtain
%$t\geq r/2$ implies $r'\leq |r'-r|+r\leq 3t$, we estimate
\begin{equation*}\label{est_ii_inter}\begin{split}
II&\lesssim \int_{t\geq r/2}
\frac{(rr')^{2}}{t^{5-2a}}
|\xi(r',z')|r'dr'dz'
\lesssim \int_{t\geq r/2}
\frac{r^2}{t^{3-2a}}
|\xi(r',z')|r'dr'dz' \\
&\lesssim r^2\left(
\int_{t\geq r/2}
\frac{1}{t^{(3-2a)3}}
 r'dr'dz'
\right)^{1/3}\|\xi\|_{L^{3/2}(\mathbb{R}^3)}
\lesssim r^{2a}
\|\xi\|_{L^{3/2}(\mathbb{R}^3)}.
\end{split}\end{equation*} 
We have proved
 \eqref{est_psi}. \end{proof}
 
Now we estimate the energy $E$ for $\xi \in L^1_w\cap L^{2}\cap L^1$, where we say that $\xi$ lies on $ L^1_w$ if
$$\|\xi\|_{L^1_w}:=\|r^2\xi\|_{L^1(\mathbb{R}^3)}=\int_{\mathbb{R}^3}r^2|\xi|\ud \bbx<\infty.$$

\begin{lemma}
%[Proposition 2.1 in  \cite{AC2019}]

\label{lem_est_energy}  For   axi-symmetric   $\xi \in \left(L^1_w\cap L^{2}\cap L^1\right)(\mathbb{R}^3)$, we have %the following  estimates:
\begin{equation}\label{est_E} 
|E[\xi]|\leq E[|\xi|]
\lesssim_a 
 \left(  
   \|\xi\|_{L^{\frac 3 2}(\mathbb{R}^3) } +   \|\xi\|_{ L^{\frac{3}{a+1}}(\mathbb{R}^3)} \right) (\| \xi\|_{L^1_w}+ \|\xi\|_{L^1 (\mathbb{R}^3)}).
\end{equation}
%\begin{equation}\label{est_iint} \left|\int_\Pi\int_\Pi G_a(r,z,r',z')\xi_1(r,z)\xi_2(r',z') rr' dr'dz'drdz\right|\lesssim_a \left(     \|\xi_1\|_{L^{\frac 3 2}(\mathbb{R}^3) } +   \|\xi_1\|_{ L^{\frac{3}{a+1}}(\mathbb{R}^3)} \right) (\|\xi_2\|_{L^1_w}+ \|\xi_2\|_{L^1 (\mathbb{R}^3)}).\end{equation}
%\begin{align}\label{est_E_diff} &\left|E[\xi_1]-E[\xi_2]\right|\lesssim \left(   \|r^2(\xi_1+\xi_2)\|_{1}+   \|\xi_1+\xi_2\|_{L^1\cap L^2} \right) \|r^2(\xi_1-\xi_2)\|^{1/2}_{1} \|\xi_1-\xi_2\|^{1/2}_{1}. \end{align}
%The measure $rdrdz$ is suppressed in \eqref{est_iint}.
 
\end{lemma}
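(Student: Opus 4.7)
The statement combines two inequalities, each of which is elementary once Lemma~\ref{lem_est_stream} is available. My plan is to first establish the monotonicity $|E[\xi]|\le E[|\xi|]$ by exploiting positivity of the integration kernel, and then bound $E[|\xi|]$ by combining the pointwise stream estimate \eqref{est_psi} with a crude weight interpolation.

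For the first inequality I would work from the representation \eqref{eq:psi-F}: the factor $(r\bR)^{a-\frac12}$ is positive and, as noted immediately after \eqref{eq:F}, $\calF_a(s)>0$ for all $s>0$. Hence the kernel defining $\calG_a$ is pointwise nonnegative, and the triangle inequality gives $|\psi(r,z)|\le \calG_a[|\xi|](r,z)$. Integrating against $|\xi|$,
\[
|E[\xi]| = \Big|\int_{\mathbb{R}^3}\psi\,\xi\,\ud\bbx\Big|\le \int_{\mathbb{R}^3}\calG_a[|\xi|]\,|\xi|\,\ud\bbx = E[|\xi|].
\]

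For the second inequality I would apply Lemma~\ref{lem_est_stream} directly to the nonnegative function $|\xi|$, which lies in $L^2\cap L^1$ by assumption. This yields the pointwise bound
\[
\calG_a[|\xi|](r,z)\lesssim_a r^{1+a}\|\xi\|_{L^{3/(a+1)}(\mathbb{R}^3)} + r^{2a}\|\xi\|_{L^{3/2}(\mathbb{R}^3)},
\]
and integrating against $|\xi|\,\ud\bbx$ reduces the task to estimating $\int_{\mathbb{R}^3} r^s|\xi|\,\ud\bbx$ for the two exponents $s=1+a$ and $s=2a$.

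The remaining step is a trivial weight interpolation: since $a\in(\tfrac12,1)$, both exponents $1+a$ and $2a$ lie in $[1,2]$, so $r^s\le 1+r^2$ for every $r>0$. Consequently
\[
\int_{\mathbb{R}^3}r^s|\xi|\,\ud\bbx \le \|\xi\|_{L^1(\mathbb{R}^3)} + \|\xi\|_{L^1_w},
\]
and assembling the three steps produces \eqref{est_E}. Because Lemma~\ref{lem_est_stream} has already absorbed the genuinely analytic work (the splitting into near- and far-field and the Hölder/Young estimates on $\calF_a$), I do not expect any real obstacle; the only point worth flagging is that the monotonicity $|E[\xi]|\le E[|\xi|]$ genuinely requires $\calF_a>0$, which is why the representation \eqref{eq:psi-F} rather than the original kernel $G_a$ (whose integrand contains the sign-changing factor $\cos\tht$) is the right object to work with.
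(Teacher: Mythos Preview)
Your argument is correct and follows essentially the same route as the paper: both apply the pointwise stream bound \eqref{est_psi} to $|\xi|$, use $r^{1+a},\,r^{2a}\le 1+r^2$ (valid since $1+a,\,2a\in(1,2)$ for $a\in(\tfrac12,1)$), and integrate against $|\xi|$. The paper packages the computation as a bilinear estimate in $(\xi_1,\xi_2)$ before specializing to $\xi_1=\xi_2=\xi$, while you go directly to the diagonal; you are also more explicit than the paper about why $|E[\xi]|\le E[|\xi|]$ holds, correctly tracing it to the positivity of $\calF_a$ in the representation \eqref{eq:psi-F}.
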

\begin{proof}

We first use \eqref{est_psi} to estimate for   axi-symmetric   $  \xi_1,\xi_2\in \left(L^1_w\cap L^{2}\cap L^1\right)$,
\begin{equation*}\begin{split}
\int_\Pi\left(\int_\Pi G_a(r,z,r'z')|\xi_1(r,z)|rdrdz\right)|\xi_2(r',z')|r'dr'dz' 
&\lesssim
 \left(  % \|r^2\xi\|_{1}+
   \|\xi_1\|_{L^{\frac 3 2} } +   \|\xi_1\|_{ L^{\frac{3}{a+1}}} \right)\int_\Pi [1+ (r')^2]  |\xi_2(r',z')|r'drdz\\
&\lesssim
 \left(  % \|r^2\xi\|_{1}+
   \|\xi_1\|_{L^{\frac 3 2} } +   \|\xi_1\|_{ L^{\frac{3}{a+1}}} \right) (\|\xi_2\|_{L^1_w}+ \|\xi_2\|_{L^1 }).
\end{split}\end{equation*}   
 Then, the estimate \eqref{est_E} follows  by setting $\xi_1\equiv\xi_2$. 
\end{proof}
\begin{comment}
\begin{lemma}\label{lem_est_posi}  For   axi-symmetric   $\xi \in \left(L^1_w\cap L^2\cap L^1\right)(\mathbb{R}^3)$, if
$\xi \in L^\infty(\mathbb{R}^3)$, then
we have %the following  estimates:
\begin{equation}\label{est_E_posi} 
 E[\xi]= C_a\int_{\mathbb{R}^3} |\Gamma^{1/2} B|^2\ud \bbx\geq 0.
\end{equation} 
\end{lemma}
\begin{proof}

  The assumption $\xi \in \left(L^1_w\cap L^\infty\cap L^1\right)(\mathbb{R}^3)$ implies that 
  $B=r \xi(r,z) e_\tht(\tht)$ lies on $L^1\cap L^2(\mathbb{R}^3)$. Indeed, we have 
$$
\int |B|\,\dd\bbx=\int r|\xi |^{1/2}|\xi|^{1/2}\,\dd\bbx\leq
\|r^2\xi\|_{1}^{1/2}  \|\xi\|_{1}^{1/2}<\infty % \sqrt{\int r^2|\xi  dx}\cdot \sqrt{\int|\xi| dx}<\infty
$$
  and 
%$$\int |\omega|\,dx=\int r|\xi|\,dx\leq \sqrt{\int r^2|\xi |dx}\sqrt{\int|\xi| dx}<\infty$$ and 
$$\int |B|^2\,\dd\bbx=\int r^2|\xi|^2\,\dd\bbx\leq \|\xi\|_\infty\int r^2|\xi|\,\dd\bbx <\infty.$$  
   As a consequence of Sobolev embedding, $\Gamma^{1/2} B$ lies on $L^p(\mathbb{R}^3)$ for any
  $$\frac 3 {3-a} < p \leq \frac 6 {3-2a}.$$ In particular, we get
   $\Gamma^{1/2} B\in L^2(\mathbb{R}^3)$. Then, the identity in \eqref{est_E_posi} follows from integration by parts (and by a proper approximation) from \eqref{defn_en}.
   \end{proof}
\end{comment}
In the next lemma, we show regularity of the stream $\Phi$ when $\xi$ is bounded and supported  in $\{r<R\}$ for some $R<\infty$. The result will be used to prove that a maximizer $\xi\in S_\mu$ is Lipschitz and compactly supported.
\begin{lemma}\label{lem_est_psi_con}  Let      $\xi \in \left(L^1_w\cap L^2\cap L^1\right)(\mathbb{R}^3)$ be axi-symmetric. %Then,\begin{enumerate}\item  the stream function $\psi:=\mathcal{G}_a[\xi]$ is continuous on $\mathbb{R}^3$, and \item 
If we assume further 
\begin{equation}\label{assump1}
 \xi\in L^\infty\quad\mbox{and}\quad \mbox{spt}\,(\xi)\subset \{r<R\}\quad\mbox{for some}\quad R<\infty, 
\end{equation}
 then
 \begin{equation}\label{Phi_reg}
\Phi:=(-\lap)^{-a}B \quad \mbox{lies on}\quad  C^{1,\alpha}\cap W^{1,q}(\mathbb{R}^3)
 \end{equation}
%$\psi\in C^{1,\alpha}$
 for some $\alpha=\alpha(a)>0$ and $q=q(a)>1$ with decay
 \begin{equation}\label{Phi_decay}
 \frac{\Phi(\bbx)}{r}= \frac{\Phi(\bbx)}{\sqrt{x_1^2+x_2^2}}\to 0\quad\mbox{as}\quad |\bbx|\to \infty.
 \end{equation} Moreover, the energy of $\xi$ defined by \eqref{defn_en} is nonnegative with the identity
\begin{equation}\label{est_E_posi} 
 E[\xi]= C_a\int_{\mathbb{R}^3} |(-\lap)^{a/2} B|^2\ud \bbx\geq 0.
\end{equation}  
%\end{enumerate}
\end{lemma}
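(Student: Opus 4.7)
The strategy is to prove the three assertions separately, leveraging that the hypotheses force $B = r\xi e_\tht \in L^p(\mathbb{R}^3)$ for every $p \in [1,\infty]$: indeed $|B| \leq R\|\xi\|_\infty$ on $\mathrm{supp}\,B \subset \{r<R\}$, $\|B\|_{L^1} \leq R\|\xi\|_{L^1}$, and interpolation handles the rest.

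For \eqref{Phi_reg}, we would write $\Phi = c_a|\cdot|^{2a-3}\ast B$, so that $\Phi = I_{2a}[B]$ and $\nabla\Phi = I_{2a-1}[B]$ are Riesz potentials of positive orders (using $a>1/2$). Hardy--Littlewood--Sobolev then gives $\Phi \in L^{q_0}$ and $\nabla\Phi \in L^{q_1}$ with $q_0, q_1 > 1$ by choosing $p$ just below $3/(2a)$ and $3/(2a-1)$ respectively; a common exponent $q>1$ yields $\Phi \in W^{1,q}$. For $C^{1,\alpha}$: the local integrability of $|\bbx|^{2a-4}$ near the origin (ensured precisely by $a>1/2$) together with $B\in L^1\cap L^\infty$ gives $\nabla\Phi \in L^\infty$, and the standard difference estimates for Riesz potentials of order $2a-1 \in (0,1)$ applied to an $L^\infty$ datum then produce $\nabla\Phi \in C^{0,\alpha}$ for any $\alpha \in (0, 2a-1)$.

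For \eqref{Phi_decay}, since $|\Phi|/r = |\phi^\tht|/r = |\psi|/r^2$, the regime $r\to\infty$ is immediate from Lemma \ref{lem_est_stream}: $|\psi|/r^2 \lesssim r^{a-1} + r^{2a-2} \to 0$, both exponents being negative for $a<1$. The delicate case is $r$ bounded with $|z|\to\infty$; here the plan is to exploit the $\cos\tht'$ cancellation in the integral for $\phi^\tht$. A first-order Taylor expansion in $r$ combined with $\int_0^{2\pi}\cos\tht'\,d\tht'=0$ should give
\[
\left|\int_0^{2\pi}\frac{\cos\tht'\,d\tht'}{(r^2-2rr'\cos\tht'+(r')^2+(z-z')^2)^{(3-2a)/2}}\right| \lesssim \frac{rr'}{((r-r')^2+(z-z')^2)^{(5-2a)/2}},
\]
whence
\[
\frac{|\phi^\tht(r,z)|}{r} \lesssim \int \frac{(r')^3|\xi(r',z')|}{((r-r')^2+(z-z')^2)^{(5-2a)/2}}\,dr'\,dz'.
\]
Splitting at $|z'| = |z|/2$, the region $|z'|\le|z|/2$ is controlled by $|z|^{-(5-2a)}\cdot R^2\|\xi\|_{L^1(\mathbb{R}^3)}/(2\pi)\to 0$ (after bounding $(r')^3 \leq R^2 \cdot r'$ and converting $r'\,dr'\,dz'$ into $d\bby$ via axisymmetry), while the region $|z'|>|z|/2$ is handled by $\|\xi\chi_{\{|z'|>|z|/2\}}\|_{L^1(\mathbb{R}^3)}\to 0$ (dominated convergence), together with a near/far split at $|\bbx-\bby|=1$ that controls the singular part by the $L^\infty$ bound of $\xi$ and local integrability of the kernel.

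The energy identity \eqref{est_E_posi} follows from $\psi\xi = r\phi^\tht \cdot b^\tht/r = \Phi\cdot B$ pointwise on $\mathbb{R}^3$, so $E[\xi] = \int\Phi\cdot B\,d\bbx$. Plancherel for the positive self-adjoint operator $(-\lap)^{-a}$ gives
\[
E[\xi] = \int_{\mathbb{R}^3}\overline{\hat B(\eta)}\,|\eta|^{-2a}\,\hat B(\eta)\,d\eta = C_a\int_{\mathbb{R}^3}\bigl|(-\lap)^{-a/2}B\bigr|^2\,d\bbx \geq 0,
\]
where we read the statement's $(-\lap)^{a/2}$ as the positive square root $(-\lap)^{-a/2}$ of $(-\lap)^{-a}$, matching the commented $\Gamma^{1/2}$ formulation. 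Finiteness comes from splitting $\int|\hat B|^2|\eta|^{-2a}\,d\eta$ at $|\eta|=1$: small $|\eta|$ uses $\|\hat B\|_\infty \leq \|B\|_{L^1}$ together with $2a<3$; large $|\eta|$ uses $\hat B\in L^2$. The main obstacle is \eqref{Phi_decay}, since $\mathrm{supp}\,B$ can extend to $\pm\infty$ along the $z$-axis, so $\Phi$ itself need not vanish at spatial infinity; the angular cancellation that also makes $\Phi/r$ continuous at the symmetry axis is precisely what is needed to recover the extra factor of $r$ required for decay.
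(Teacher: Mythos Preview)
Your treatments of \eqref{Phi_reg} and \eqref{est_E_posi} are correct and essentially match the paper's. The paper phrases the regularity via Sobolev embedding rather than explicit Riesz potentials and Hardy--Littlewood--Sobolev, and it establishes $(-\Delta)^{-a/2}B\in L^2$ by the same embedding rather than by your Fourier-side splitting at $|\eta|=1$, but these are equivalent routes.

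The decay argument for \eqref{Phi_decay}, however, has a genuine gap in your Region~2 near-diagonal piece. After the angular cancellation you arrive at a bound with kernel $D^{-(5-2a)/2}$, where $D=(r-r')^2+(z-z')^2$ is the \emph{two-dimensional} distance in the half-plane $\Pi$. You then propose to control the part near the diagonal by $\|\xi\|_{L^\infty}$ times the local integral of this kernel; but $(5-2a)/2>3/2>1$ for every $a<1$, so $D^{-(5-2a)/2}$ is \emph{not} locally integrable over a two-dimensional neighbourhood of the diagonal, and the step fails. Passing to the three-dimensional variable $\bby$ does not help either, since $D\le|\bbx-\bby|^2$ pushes the singularity the wrong way. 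One could try to recover by using the sharper bound $\calF_a(s)\lesssim s^{-(1-a)}$ from \eqref{est_F} near the diagonal in place of your Taylor estimate, but the resulting prefactor behaves like $r^{-3/2}$ and obstructs the regime $r\to 0$ with $|z|\to\infty$; the argument still does not close uniformly in~$r$.

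The paper avoids direct kernel estimates for \eqref{Phi_decay} entirely. From the regularity step it has $\nabla\Phi\in L^{q}\cap C^{0,\alpha}$ for some finite $q$; Hardy's inequality then gives $\Phi/r\in L^{q}(\mathbb{R}^3)$. On the other hand, $\Phi\in C^{1,\alpha}$ together with $\Phi|_{r=0}=0$ yields $\Phi/r\in C^{0,\alpha}$ (write $\phi^\tht(r,z)/r=\int_0^1\partial_r\phi^\tht(tr,z)\,dt$). Since any function in $L^{q}\cap C^{0,\alpha}$ with $q<\infty$ tends to zero at infinity, the decay follows. This soft argument is completely insensitive to the location relative to the symmetry axis, which is exactly the regime where your estimate breaks down. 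If you want to keep your kernel-based proof, the clean patch is to use what you already proved in part~1: $\nabla\Phi\in L^q\cap C^{0,\alpha}$ forces $\nabla\Phi\to 0$ at infinity, and then for $r$ bounded and $|z|$ large the mean-value bound $|\Phi(\bbx)|/r\le\sup_{t\in[0,1]}|\nabla\Phi(tr e_1+ze_3)|$ gives the decay directly.
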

\begin{proof}

The assumption \eqref{assump1} gives $B=B(\bbx)=r\xi(r,z) e_\theta(\theta)\in L^1\cap L^\infty(\mathbb{R}^3)$.
Since %$\Phi=(-Delta)^{-a} B$ and
 $\nabla\Phi=\nabla(-\Delta)^{-a} B$ with $a>1/2$,
 we have  
\begin{equation}\label{est_nab_Phi}
 \nabla \Phi\in L^{q}\cap C^{0,\alpha}(\mathbb{R}^3) 
\end{equation} 
 for any $q>\frac{3}{4-2a}$ and for any $0<\alpha<2a-1$  
%  \Red{any reference for $C^\alpha$?} 
  by Sobolev embedding.
 Similarly, 
 $\Phi=(-\Delta)^{-a} B\in L^{q}(\mathbb{R}^3)$ for any $q >\frac{3}{3-2a}$.  
 % and the Riesz transform.  
 Hence, we get
 \begin{equation*}\label{phi_w_1_q}
 \Phi\in W^{1,q}\cap C^{1,\alpha}(\mathbb{R}^3)\quad\mbox{for any}\quad q>\frac 3 {3-2a} \quad\mbox{and for any} \quad \alpha\in(0,2a-1). \end{equation*}
 
  To show the decay \eqref{Phi_decay},
   we recall \eqref{est_nab_Phi}.
   By Hardy's inequality, $\nabla \Phi\in L^q$  implies 
   $ \Phi/r\in L^q$. On the other hand,
   $ \Phi\in C^{1,\alpha}$ with $\Phi|_{r=0}=0$ implies
   $\Phi/r\in C^{\alpha}$. Hence we obtain the decay \eqref{Phi_decay}.
 
   For the identity \eqref{est_E_posi}, we observe that $\Gamma^{1/2} B$ lies on $L^p(\mathbb{R}^3)$ for any
  $p> \frac 3 {3-a}$. In particular, we get
   $\Gamma^{1/2} B\in L^2(\mathbb{R}^3)$. Then, the identity  follows  from \eqref{defn_en} by   integration by parts:
\begin{align*} E[\xi]&%=\int r\xi\frac \psi r  \dd\bbx
= \int B\cdot \Phi \dd\bbx
=\int(-\Delta)^{-a}B\cdot B \dd\bbx
=\int |(-\Delta)^{-a/2} B|^2 \dd\bbx. \qedhere \end{align*}  
\end{proof}

\subsection{Proof of existence of a maximizer (Proposition \ref{prop_prop})}
% \Red{We fix $\frac 1 2< a <1$,} and any estimates in the sequel may depend on the choice of $a$. 

As we defined $K_\mu, I_\mu,$ and $S_\mu$, we additionally define 
 \begin{equation*}\label{defn_adm_cla_pr}
\begin{split}
	K_\mu'&:=\left\{
\xi\in L^2(\mathbb{R}^3)\quad|\quad \xi=\xi(r,z)\geq 0,\quad \frac 1 2 \int_{\mathbb{R}^3}r^2\xi(\bbx)\ud\bbx\leq \mu,\quad \int_{\mathbb{R}^3}\xi(\bbx)\ud \bbx\leq 1
	\right\},\\
	I_\mu'&:=\sup_{\xi\in K_\mu'}E_2[\xi],\\
	S_\mu'&:=\{
\xi\in K_\mu'\quad|\quad E_2[\xi]=I_\mu'	
	\}.
	\end{split}
\end{equation*} 
%Say $S_\mu'$ be the set of maximizers in $K_\mu'$ for the functional $E_2$.
Since $0\in K_\mu'$, we simply note $K_\mu\subsetneq K_\mu'$ and $I_\mu\leq I_\mu'$. In the sequel, we will prove
$$I_\mu'=I_\mu\in(0,\infty)\quad\mbox{and}\quad S_\mu'=S_\mu\neq \emptyset.$$ %\Red{has to write down}

\begin{lemma}\label{lem_nontrivial_extra} Let $\mu>0$. Then we have
 \begin{equation}\label{est_i_nontrivial}
0<I_{\mu}\leq I_\mu'<\infty,
\end{equation}
and  the value is strictly increasing :
 \begin{equation}\label{est_i_strict_mono}
 I_{\mu_1}<I_{\mu_2}\quad\mbox{for any}\quad 0<\mu_1<\mu_2<\infty. 
 \end{equation}
%In addition, \begin{equation}\label{est_L2_bdd} \sup_{\xi\in K_\mu}\|\xi\|_{L^2(\mathbb{R}^3)}<\infty.\end{equation}

\end{lemma}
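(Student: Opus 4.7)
The inclusion $K_\mu \subset K_\mu'$ immediately gives $I_\mu \leq I_\mu'$, so \eqref{est_i_nontrivial} reduces to two separate bounds. All parts of the lemma rest on the same observation: for $a>\frac{1}{2}$, the energy $E$ scales strictly better than the $L^2$-penalty $\|\xi\|_{L^2}^2$ under the natural rescalings compatible with the impulse constraint.

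\textbf{Finiteness $I_\mu' < \infty$.} On $K_\mu'$ we have $\|\xi\|_{L^1}\leq 1$ and $\|\xi\|_{L^1_w}\leq 2\mu$, so Lemma~\ref{lem_est_energy} gives $|E[\xi]| \lesssim_{a,\mu} \|\xi\|_{L^{3/2}} + \|\xi\|_{L^{3/(a+1)}}$. Both exponents lie strictly between $1$ and $2$ for $a\in(\tfrac{1}{2},1)$, so Lyapunov interpolation between $L^1$ and $L^2$ together with $\|\xi\|_{L^1}\leq 1$ yields $|E[\xi]| \leq C_{a,\mu}\bigl(\|\xi\|_{L^2}^{2/3}+\|\xi\|_{L^2}^{2(2-a)/3}\bigr)$, with both exponents in $(0,1)$. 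Hence $E_2[\xi]$ is bounded above by a function of $\|\xi\|_{L^2}$ that tends to $-\infty$, giving a finite supremum.

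\textbf{Positivity $I_\mu > 0$.} Fix any non-zero smooth axi-symmetric $\xi_0\geq 0$ supported in $\{r<R_0\}$; Lemma~\ref{lem_est_psi_con} yields $E[\xi_0] = C_a\|(-\lap)^{-a/2}B_0\|_{L^2}^2 > 0$. Rescale by $\xi_\lambda(\bbx) := \varepsilon_\lambda \xi_0(\bbx/\lambda)$ with $\varepsilon_\lambda := 2\mu/(\lambda^5\|r^2\xi_0\|_{L^1})$, chosen exactly so that $\frac{1}{2}\int r^2\xi_\lambda\,\ud\bbx = \mu$. Then $\int\xi_\lambda\,\ud\bbx = O(\lambda^{-2})$, so $\xi_\lambda\in K_\mu$ for large $\lambda$. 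The identity $B_\lambda(\bbx) = \lambda\varepsilon_\lambda B_0(\bbx/\lambda)$ together with the homogeneity $(-\lap)^{-a}[f(\cdot/\lambda)] = \lambda^{2a}((-\lap)^{-a}f)(\cdot/\lambda)$ gives $E[\xi_\lambda] = \varepsilon_\lambda^2\lambda^{2a+5}E[\xi_0]$ and $\|\xi_\lambda\|_{L^2}^2 = \varepsilon_\lambda^2\lambda^3\|\xi_0\|_{L^2}^2$; their ratio $\lambda^{2a+2}(E[\xi_0]/\|\xi_0\|_{L^2}^2) \to \infty$, so $E_2[\xi_\lambda] > 0$ for $\lambda$ large.

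\textbf{Strict monotonicity.} For $\mu_2>\mu_1>0$ set $\lambda := \sqrt{\mu_2/\mu_1}>1$, and for $\xi\in K_{\mu_1}$ define $\tilde\xi(\bbx) := \lambda^{-3}\xi(\bbx/\lambda)$. One checks $\int\tilde\xi\,\ud\bbx = \int\xi\,\ud\bbx\leq 1$ and $\frac{1}{2}\int r^2\tilde\xi\,\ud\bbx = \lambda^2\mu_1 = \mu_2$, so $\tilde\xi\in K_{\mu_2}$; the same scaling computation yields $E[\tilde\xi] = \lambda^{2a-1}E[\xi]$ and $\|\tilde\xi\|_{L^2}^2 = \lambda^{-3}\|\xi\|_{L^2}^2$, whence
\begin{equation*}
E_2[\tilde\xi] - E_2[\xi] = (\lambda^{2a-1}-1)E[\xi] + (1-\lambda^{-3})\|\xi\|_{L^2}^2,
\end{equation*}
with both coefficients positive since $\lambda>1$ and $a>\frac{1}{2}$. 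The main subtlety is securing nonnegativity of $E[\xi]$ along a near-maximizing sequence without a direct appeal to Lemma~\ref{lem_est_psi_con}, which requires a bounded, compactly supported $\xi$. This is bypassed using the already-proved $I_{\mu_1}>0$: any $\xi\in K_{\mu_1}$ with $E_2[\xi]\geq I_{\mu_1}/2$ automatically satisfies $E[\xi]\geq E_2[\xi] \geq I_{\mu_1}/2>0$. Letting $E_2[\xi]\to I_{\mu_1}$ along near-maximizers then yields $I_{\mu_2}\geq I_{\mu_1} + \frac{1}{2}(\lambda^{2a-1}-1)I_{\mu_1} > I_{\mu_1}$.
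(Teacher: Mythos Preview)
Your proof is correct. The finiteness and positivity arguments follow essentially the same scaling idea as the paper (your $\lambda\to\infty$ is the paper's $\sigma\to 0$ under the substitution $\sigma=1/\lambda$), with the only difference being that you invoke Lemma~\ref{lem_est_psi_con} to get $E[\xi_0]>0$ while the paper simply notes that $\xi\geq 0$ forces $\psi\geq 0$ pointwise from the kernel representation, hence $E[\xi_0]=\int\psi\xi_0>0$.

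The monotonicity argument, however, is genuinely different. The paper uses the rescaling $\xi^\sigma(x)=\sigma^{5/2+a}\xi(\sigma x)$, which is tailored so that $E[\xi^\sigma]=E[\xi]$ exactly while $\|\xi^\sigma\|_{L^2}^2=\sigma^{2+2a}\|\xi\|_{L^2}^2$; this gives $I_{\sigma^{a-5/2}\mu}\geq I_\mu$ directly, and strictness is obtained by contradiction: if equality held, any maximizing sequence would satisfy $\|\xi_n\|_{L^2}\to 0$, which via Lemma~\ref{lem_est_energy} forces $E[\xi_n]\to 0$, contradicting $I_\mu>0$. Your mass-preserving scaling $\tilde\xi(x)=\lambda^{-3}\xi(x/\lambda)$ does not keep $E$ invariant, so you need the sign $E[\xi]\geq 0$ along near-maximizers; you recover this cleanly from the already-established $I_{\mu_1}>0$ and the trivial inequality $E[\xi]\geq E_2[\xi]$. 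Your route is slightly more direct (no contradiction step) and yields an explicit quantitative lower bound $I_{\mu_2}\geq \bigl(1+\tfrac{1}{2}((\mu_2/\mu_1)^{a-1/2}-1)\bigr)I_{\mu_1}$; the paper's route has the advantage that the energy-preserving scaling isolates the $L^2$-penalty as the sole source of the strict gain, making the mechanism more transparent.
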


\vspace{5pt}

\begin{proof}
 
  By \eqref{est_E} and Young's inequality, we have, for any $\xi\in K'_{\mu}$, %\Red{  $a\geq 1/2$ is used so that $\frac 3{a+1}<2$}
  \begin{equation}\begin{split}\label{est_e_by_l2}
E[\xi]&\leq  C\left(  
   \|\xi\|_{L^{1}(\mathbb{R}^3) } +   \|\xi\|_{ L^{\frac{3}{a+1}}(\mathbb{R}^3)} \right) (\| \xi\|_{L^1_w}+ \|\xi\|_{L^1 (\mathbb{R}^3)}) 
   \leq C(\mu+1)+ C(\mu+1)\|\xi\|_{ L^{\frac{3}{a+1}}(\mathbb{R}^3)}  
   \\&   \leq C(\mu+1)+ C(\mu+1)\|\xi\|_{ L^{2}(\mathbb{R}^3)} 
 \leq C(\mu+1)+ C(\mu+1)^2+\frac 1 2\|\xi\|^2_{ L^{2}(\mathbb{R}^3)} .
\end{split}  \end{equation} It implies
 $$
  E_2[\xi]=E[\xi]-\|\xi\|^2_{L^2(\mathbb{R}^3)}\leq   C(\mu+1)+ C(\mu+1)^2+\frac 1 2\|\xi\|^2_{ L^{2}(\mathbb{R}^3)}-\|\xi\|^2_{L^2(\mathbb{R}^3)}\leq C(\mu+1)^2<\infty,
$$
  which gives $I_\mu'<\infty$.

To prove $I_\mu>0$, we set $\xi_1=1_{B_a(0)}$ for $B_a(0)=\{x\in \mathbb{R}^{3}\ |\ |x|<a\}$ and choose $a>0$ so that $\frac  1 2 \int r^2\xi_1(x)\dd \bbx=\mu$. We set $\xi_{\sigma}(x)=\sigma^{5}\xi_1(\sigma x)$ for $0<\sigma<1$, and observe that 
\begin{align*}
&\frac 1 2 \int_{\mathbb{R}^{3}}r^2\xi_\sigma(x)\dd \bbx=\frac 1 2 \int_{\mathbb{R}^{3}}r^2\xi_1(x)\dd \bbx=\mu, \\
&\int_{\mathbb{R}^{3}}\xi_\sigma\dd \bbx=\sigma^2 \int_{\mathbb{R}^{3}}\xi_1\dd \bbx\leq 1, \\
%&\int_{\mathbb{R}^{3}}\xi_\sigma^2\dd \bbx=\sigma^7 \int_{\mathbb{R}^{3}}\xi_1^2\dd \bbx, \\
%&\psi_\sigma(r,z)=\sigma^{3-2a}\psi_1(\sigma r,\sigma z),\\
%&E[\xi_\sigma]=\sigma^{5-2a} E[\xi_1],\\
&E_2[\xi_{\sigma}]=
\sigma^{5-2a} \left(E[\xi_1]-\sigma^{2+2a}\int \xi_1^2 \dd \bbx\right).
\end{align*}

Thanks to $\xi\geq 0$, we have $\psi\geq 0$. Then the definition of the energy \eqref{defn_en}  gives  $E[\xi_1]>0$. Thus
 for sufficiently small $0<\sigma\ll 1$,  $\xi_{\sigma}\in K_{\mu} $  satisfies
\begin{align*}
I_\mu\geq E_{2}[\xi_{\sigma}]>0, 
\end{align*}
which proved  \eqref{est_i_nontrivial}. 

For   monotonicity in $\mu>0$, we fix any $0<\sigma<1$, take any $\xi\in K_\mu$ and set
$\xi^{\sigma}(x)=\sigma^{\frac 5 2 + a}\xi(\sigma x)$. It gives
\begin{align*}
&\frac 1 2 \int_{\mathbb{R}^{3}}r^2\xi^\sigma(x)\dd \bbx =\sigma^{-\frac 5 2 +a}\mu>\mu, \\
&\int_{\mathbb{R}^{3}}\xi^\sigma\dd \bbx\leq \sigma^{a-\frac 1 2}\leq 1, %\Red{\mbox{need }  a\geq \frac1 2 } 
\\
&\int_{\mathbb{R}^{3}}(\xi^\sigma)^2\dd \bbx=\sigma^{2+2a} \int_{\mathbb{R}^{3}}(\xi)^2\dd \bbx, \\
%&\psi^\sigma(r,z)=\sigma^{\frac 1 2 -a}\psi(\sigma r,\sigma z),\\
&E[\xi^\sigma]=  E[\xi].
%&E_2[\xi_{\sigma}]=\sigma^{7-2a} \left(E[\xi_1]-\sigma^{2+2a}\int \xi_1^2 \dd x\right).
\end{align*} So   we have $\xi^\sigma\in K_{\sigma^{-\frac 5 2 + a}\mu}$ and 
\begin{equation}\begin{split}\label{est_scal_mon}
I_{\sigma^{-\frac 5 2 +a}\mu}\geq E_2[\xi^\sigma]&=E[\xi]-\sigma^{2+2a}\|\xi\|^2_{L^2(\mathbb{R}^3)}
 =E_2[\xi]+\underbrace{(1-\sigma^{2+2a})}_{>0}\|\xi\|^2_{L^2(\mathbb{R}^3)}>E_2[\xi].
\end{split}\end{equation} By taking $\sup_{\xi\in L_\mu}$, we get
$$
I_{\sigma^{-\frac 5 2 +a}\mu}\geq I_\mu.
$$ For strictness in the monotonicity \eqref{est_i_strict_mono}, let's assume 
\begin{equation}\label{hyp_mon}
I_{\sigma^{-\frac 5 2 +a}\mu}= I_\mu
\end{equation}
 for a contradiction. Then take a sequence $\{\xi_n\}_{n=1}^\infty\subset K_\mu$ satisfying $E_2[\xi_n]\to I_\mu$. This hypothesis \eqref{hyp_mon} together with \eqref{est_scal_mon} implies
$$
\|\xi_n\|_{L^2(\mathbb{R}^3)}\to 0.
$$
The estimate \eqref{est_E} of Lemma \ref{lem_est_energy} gives
\begin{equation*} \begin{split}
|E[\xi_n]|&\leq C
 \left(  
   \|\xi_n\|_{L^{\frac 3 2}(\mathbb{R}^3) } +   \|\xi_n\|_{ L^{\frac{3}{a+1}}(\mathbb{R}^3)} \right) (\| \xi_n\|_{L^1_w}+ \|\xi_n\|_{L^1 (\mathbb{R}^3)})\\
   &\leq C(\mu+1)
 \left(  
   \|\xi_n\|^{\frac 1 3}_{L^{1}(\mathbb{R}^3) } \|\xi_n\|^{\frac 2 3}_{L^{2}(\mathbb{R}^3) } +     \|\xi_n\|^{\frac{2a-1}{3}}_{L^{1}(\mathbb{R}^3) } \|\xi_n\|^{\frac{4-2a}{3}}_{L^{2}(\mathbb{R}^3) }  \right)\quad   \to\quad  0\quad \mbox{as}\quad n\to\infty. %\Red{\mbox{need }  a\geq \frac 1 2} 
\end{split}\end{equation*} These two convergences mean $I_\mu=\lim_{n\to\infty}E_2[\xi_n]= 0$, which contradicts the fact $I_\mu>0$ from \eqref{est_i_nontrivial}. As a result, we obtain the strict monotonicity \eqref{est_i_strict_mono}. \end{proof}
 
 We observe that any maximizing sequence is bounded in $L^2$.
\begin{lemma}\label{lem_l2_unif}
 Let $\mu>0$. Then any   sequence $\{\xi_n\}_{n=1}^\infty\subset K_\mu$ satisfying $E_{2}[\xi_n]\to I_{\mu}$ satisfies
 % is uniformly bounded in $L^{2}$.
 $$\limsup_{n\to\infty} \|\xi_n\|_{L^2(\mathbb{R}^3)}<\infty.$$
 \end{lemma}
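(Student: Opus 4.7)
The plan is to leverage the energy estimate \eqref{est_e_by_l2} that was already established in the proof of Lemma~\ref{lem_nontrivial_extra}. Any $\xi_n\in K_\mu$ satisfies $\|\xi_n\|_{L^1}\le 1$ and $\|\xi_n\|_{L^1_w}=2\mu$, so combining Lemma~\ref{lem_est_energy} with interpolation between $L^1$ and $L^2$ and Young's inequality yields an estimate of exactly the form
\begin{equation*}
E[\xi_n] \;\le\; C(\mu+1)^2 + \tfrac{1}{2}\|\xi_n\|_{L^2(\mathbb{R}^3)}^2.
\end{equation*}
The crucial structural point is that $a>1/2$ forces $3/(a+1)<2$, so interpolating $\|\xi_n\|_{L^{3/(a+1)}}$ between $L^1$ and $L^2$ and applying Young's inequality produces a power of $\|\xi_n\|_{L^2}$ strictly less than $2$, which can then be absorbed with constant depending only on $\mu$.

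Subtracting $\|\xi_n\|_{L^2}^2$ from both sides and using the definition of $E_2$, I would deduce
\begin{equation*}
E_2[\xi_n] \;=\; E[\xi_n]-\|\xi_n\|_{L^2(\mathbb{R}^3)}^2 \;\le\; C(\mu+1)^2 \;-\; \tfrac{1}{2}\|\xi_n\|_{L^2(\mathbb{R}^3)}^2.
\end{equation*}
Since $E_2[\xi_n]\to I_\mu$ with $I_\mu>0$ by Lemma~\ref{lem_nontrivial_extra}, we have $E_2[\xi_n]\ge I_\mu-1$ for all sufficiently large $n$. Rearranging the previous inequality then gives
\begin{equation*}
\tfrac{1}{2}\|\xi_n\|_{L^2(\mathbb{R}^3)}^2 \;\le\; C(\mu+1)^2 - (I_\mu - 1),
\end{equation*}
which is a uniform upper bound, proving $\limsup_n \|\xi_n\|_{L^2(\mathbb{R}^3)}<\infty$.

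Because this argument is essentially a re-reading of estimates already established in the excerpt, I do not anticipate any genuine obstacle; the proof reduces to one line of bookkeeping. The only subtlety I would make explicit in the write-up is the role of the restriction $a>1/2$: it is precisely the condition that keeps all powers of $\|\xi_n\|_{L^2}$ arising from Lemma~\ref{lem_est_energy} strictly below $2$, which is what makes the absorption by the $-\|\xi_n\|_{L^2}^2$ penalty inside $E_2$ effective.
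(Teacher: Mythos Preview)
Your proposal is correct and follows essentially the same approach as the paper: both invoke the estimate \eqref{est_e_by_l2} (namely $E[\xi_n]\le C(\mu+1)^2+\tfrac12\|\xi_n\|_{L^2}^2$) and rearrange using $E_2[\xi_n]=E[\xi_n]-\|\xi_n\|_{L^2}^2$ together with $E_2[\xi_n]\to I_\mu$. The paper presents the algebra starting from $\|\xi_n\|_{L^2}^2=E[\xi_n]-E_2[\xi_n]$ rather than from $E_2[\xi_n]$, but this is the same computation.
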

 \begin{proof} 
 
By using the estimate \eqref{est_e_by_l2}, we have  \begin{align*}
\|\xi_n\|^2_{L^2(\mathbb{R}^3)}=E[\xi_n]-E_2[\xi_n] \leq    C(\mu+1)^2+\frac 1 2\|\xi_n\|^2_{ L^{2}(\mathbb{R}^3)}- E_2[\xi_n] , 
\end{align*} which implies
$$
\limsup_{n\to\infty}\|\xi_n\|^2_{L^2(\mathbb{R}^3)}\leq C(\mu+1)^2-I_\mu\leq C(\mu+1)^2 <\infty. \qedhere 
$$

\end{proof}

The following lemma is useful when we need convergence of the  energy for a weak-convergent sequence $\{\xi_n\}$ when the energy of each member $\xi_n$ is   uniformly concentrated in a fixed bounded set.

\begin{lemma}\label{lem_en_diff}For  non-negative axi-symmetric functions $\xi_1, \xi_2\in \left(L^{1}_w\cap L^2\cap L^{1}\right)(\mathbb{R}^{3})$ and for   axi-symmetric  set $U\subset\mathbb{R}^3$,  we have
\begin{equation}\begin{split}\label{est_en_diff}
\left| E[\xi_1]-E[\xi_2]\right|
&\leq C_a\left| \int_{{  U } }\int_{{  U}  }  {G_a}(\bbx,\bar\bbx)\Big(\xi_1(\bbx)\xi_1(\bar\bbx)-\xi_2(\bbx)\xi_2(\bar\bbx)\Big)\,d\bbx d\bar\bbx \right|
\\&\quad \quad + 
C_a \int_{\mathbb{R}^{3}\setminus {U }}\xi_1(\bbx)\mathcal{G}_a[\xi_1](\bbx)\,d\bbx 
+  C_a\int_{\mathbb{R}^{3}\setminus {U }}\xi_2(\bbx)\mathcal{G}_a[\xi_2](\bbx)\,d\bbx.
\end{split}\end{equation} 
\end{lemma}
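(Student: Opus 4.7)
The plan is to rewrite the energy as a symmetric nonnegative double integral and to split the product domain into pieces determined by $U$ and its complement $U^{c}:=\mathbb{R}^{3}\setminus U$. By the convolution representation from \eqref{eq:psi-origin}, for axi-symmetric $\xi$ we have
\[
E[\xi]=\int_{\mathbb{R}^{3}}\xi\,\mathcal{G}_{a}[\xi]\,d\bbx=C_{a}\int_{\mathbb{R}^{3}}\int_{\mathbb{R}^{3}}G_{a}(\bbx,\bar\bbx)\,\xi(\bbx)\xi(\bar\bbx)\,d\bbx\,d\bar\bbx,
\]
where $G_{a}(\bbx,\bar\bbx)$ is nonnegative (thanks to positivity of $\calF_{a}$ in \eqref{eq:F}) and manifestly symmetric in its two arguments.

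First I would split the product domain as $\mathbb{R}^{3}\times\mathbb{R}^{3}=(U\times U)\cup(U\times U^{c})\cup(U^{c}\times U)\cup(U^{c}\times U^{c})$. Setting
\[
\mathcal{A}[\xi]:=\int_{U}\int_{U}G_{a}\xi\xi,\quad M[\xi]:=\int_{U}\int_{U^{c}}G_{a}\xi\xi,\quad D[\xi]:=\int_{U^{c}}\int_{U^{c}}G_{a}\xi\xi,
\]
and using symmetry of $G_{a}$ to collapse the two cross terms, we obtain
\[
E[\xi]=C_{a}\mathcal{A}[\xi]+2C_{a}M[\xi]+C_{a}D[\xi].
\]
Next I would identify $M[\xi]$ and $D[\xi]$ with the tail integral appearing in \eqref{est_en_diff}. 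Indeed, by Fubini,
\[
\int_{U^{c}}\xi(\bbx)\,\mathcal{G}_{a}[\xi](\bbx)\,d\bbx=C_{a}\bigl(M[\xi]+D[\xi]\bigr),
\]
and since $\xi,G_{a}\geq 0$ both $C_{a}M[\xi]$ and $C_{a}D[\xi]$ are nonnegative and each is individually dominated by this tail integral.

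Finally, I would write
\[
E[\xi_{1}]-E[\xi_{2}]=C_{a}\bigl(\mathcal{A}[\xi_{1}]-\mathcal{A}[\xi_{2}]\bigr)+2C_{a}\bigl(M[\xi_{1}]-M[\xi_{2}]\bigr)+C_{a}\bigl(D[\xi_{1}]-D[\xi_{2}]\bigr),
\]
apply the triangle inequality, and bound the $M$- and $D$-pieces of each $\xi_{i}$ by the corresponding tail $\int_{U^{c}}\xi_{i}\mathcal{G}_{a}[\xi_{i}]d\bbx$. A harmless numerical factor is absorbed into the generic constant $C_{a}$ in \eqref{est_en_diff}.

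This is essentially a bookkeeping argument and I do not expect a real obstacle. The one preliminary point worth double-checking is the symmetric double-integral representation of $E[\xi]$ on $\mathbb{R}^{3}\times\mathbb{R}^{3}$ and the sign/symmetry properties of $G_{a}(\bbx,\bar\bbx)$; both follow directly from the convolution formula $\Phi=C_{a}|\cdot|^{2a-3}\ast B$ combined with axi-symmetry of $\xi$, which is exactly the reduction already performed in \eqref{eq:psi-origin}.
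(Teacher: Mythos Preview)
Your argument is correct and is precisely the standard splitting-and-symmetry bookkeeping that the paper has in mind; the paper itself does not spell out the details but defers to \cite[Lemma~4.5]{Choi_hill} for the case $a=1$, and the computation there is identical to yours with $G_a$ in place of the Euler kernel. The only cosmetic point is that the generic constant $C_a$ in \eqref{est_en_diff} absorbs the factor $2$ coming from the cross terms, exactly as you note.
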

\begin{proof}
The proof is exactly parallel to  that of Lemma 4.5 in \cite{Choi_hill} that is the case $a=1$.
\end{proof}

% its logarithm behavior \eqref{log_beha} near $x=y$.  

 For given non-negative axi-symmetric $\xi$, we can define  the symmetrical rearrangement $\xi^*(r,\cdot_z)$ of $\xi(r,\cdot_z)$ for each $r>0$ about the plane $z=0$. 
 Then the  Riesz rearrangement inequality implies the energy inequality below (\textit{e.g.} see \cite[Section 3.3]{LiebLoss}).
\begin{lemma}[Steiner symmetrization]\label{lem_steiner}
For any non-negative axi-symmetric $\xi\in L^{2}\cap L^1_w\cap L^{1}(\mathbb{R}^{3} )$,   there exists a non-negative axi-symmetric $\xi^{*} \in L^{2}\cap L^1_w\cap L^{1}(\mathbb{R}^{3} )$ such that 
\begin{equation}\label{cond_sym}
\begin{aligned}
&\xi^{*}(r,z)=\xi^{*}(r,-z)\quad \mbox{for any}\quad  r,z>0\quad\mbox{and for each}\quad r>0,\quad \xi^{*}(r,\cdot_z)\ \textrm{is non-increasing for}\ z>0. 
\end{aligned}
 \end{equation}
Moreover, 
\begin{align*}
&||\xi^{*}||_{L^q(\mathbb{R}^3)}=||\xi||_{L^q(\mathbb{R}^3)}\quad 1\leq q\leq 2,  \quad ||r^2\xi^{*}||_{L^1(\mathbb{R}^3)}=||r^2\xi||_{L^1(\mathbb{R}^3)},  \quad\mbox{and}\quad E[\xi^*]\geq E[\xi]. 
\end{align*}
\end{lemma}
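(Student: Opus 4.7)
For each fixed $r>0$, define $\xi^*(r,\cdot)\colon\mathbb{R}\to\mathbb{R}_{\ge0}$ as the classical one--dimensional symmetric decreasing rearrangement of $\xi(r,\cdot)$ about $z=0$; joint measurability of $\xi^*$ in $(r,z)$ follows from the layer--cake representation applied to the super--level sets of $\xi(r,\cdot)$, together with Fubini. The pointwise properties \eqref{cond_sym} are built into the definition. Equimeasurability of the 1D rearrangement gives $\int_\mathbb{R}|\xi^*(r,z)|^q\,dz=\int_\mathbb{R}|\xi(r,z)|^q\,dz$ for every $r>0$ and every $q\in[1,\infty]$; since the cylindrical volume element factorizes as $r\,dr\otimes dz$ (and similarly the measure $r^3\,dr\otimes dz$ for the impulse), Fubini then yields $\|\xi^*\|_{L^q(\mathbb{R}^3)}=\|\xi\|_{L^q(\mathbb{R}^3)}$ for $1\le q\le 2$ as well as $\|r^2\xi^*\|_{L^1(\mathbb{R}^3)}=\|r^2\xi\|_{L^1(\mathbb{R}^3)}$.

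\textbf{Energy inequality.} Rewriting \eqref{defn_en} through the Green's function \eqref{eq:psi-origin},
\begin{equation*}
E[\xi]=C_a\int_\Pi\!\!\int_\Pi G_a(r,z,\bar r,\bar z)\,\xi(r,z)\xi(\bar r,\bar z)\,r\bar r\,dr\,dz\,d\bar r\,d\bar z.
\end{equation*}
For each fixed pair $(r,\bar r)\in(0,\infty)^2$, the kernel depends on $(z,\bar z)$ only through $t:=|z-\bar z|^2$. The plan is to show that $t\mapsto G_a(r,z,\bar r,\bar z)$ is \emph{strictly decreasing}, so that, as a function of $w:=z-\bar z\in\mathbb{R}$, $G_a$ is even and symmetric decreasing. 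Given this, the one--dimensional Riesz (Steiner) rearrangement inequality applied in the variables $z,\bar z$ gives, for every fixed $(r,\bar r)$,
\begin{equation*}
\int_\mathbb{R}\!\!\int_\mathbb{R} G_a\,\xi(r,z)\xi(\bar r,\bar z)\,dz\,d\bar z\le\int_\mathbb{R}\!\!\int_\mathbb{R} G_a\,\xi^*(r,z)\xi^*(\bar r,\bar z)\,dz\,d\bar z.
\end{equation*}
Multiplying by $r\bar r\ge 0$ and integrating in $(r,\bar r)$ yields $E[\xi^*]\ge E[\xi]$, as claimed; integrability is guaranteed by Lemma~\ref{lem_est_energy} applied to both $\xi$ and $\xi^*$, which share all relevant norms by the previous paragraph.

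\textbf{The one technical step and the main obstacle.} The monotonicity of $G_a$ in $t=|z-\bar z|^2$ is the only nontrivial point, since the kernel mixes terms of both signs through $\cos\theta$. I would split $\int_0^\pi\cdots\,d\theta=\int_0^{\pi/2}+\int_{\pi/2}^\pi$ and substitute $\theta'=\pi-\theta$ in the second piece to rewrite
\begin{equation*}
G_a=\int_0^{\pi/2}r\bar r\cos\theta\,\Bigl[(A^-(\theta)+t)^{-(3-2a)/2}-(A^+(\theta)+t)^{-(3-2a)/2}\Bigr]\,d\theta,
\end{equation*}
where $A^\pm(\theta):=r^2+\bar r^2\mp2r\bar r\cos\theta$ satisfy $0<A^-(\theta)<A^+(\theta)$ on $(0,\pi/2)$. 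The bracket is pointwise positive, and differentiating in $t$ preserves its sign while bringing out the prefactor $-\tfrac{3-2a}{2}<0$ (recall $a<1<\tfrac{3}{2}$); hence $\partial_t G_a<0$, giving the strict monotonicity in $|z-\bar z|$. The main obstacle is precisely this sign computation, since $G_a$ is \emph{not} manifestly positive or monotone from \eqref{eq:psi-origin}; once it is in hand, the remainder of the argument reduces to the standard one--variable Riesz inequality and Fubini.
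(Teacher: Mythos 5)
Your argument is correct and is exactly the route the paper intends: the paper gives no proof beyond citing the Riesz rearrangement inequality (Lieb--Loss, Section 3.3), and your reduction to the one-dimensional Riesz inequality via the positivity and monotonicity of $G_a$ in $|z-\bar z|$ is the standard way to carry that out. The only blemish is a labeling typo: with $A^{\pm}(\theta):=r^2+\bar r^2\mp 2r\bar r\cos\theta$ you get $A^{-}>A^{+}$ on $(0,\pi/2)$, contradicting your stated ordering; the $\mp$ should be $\pm$, after which the sign computation and the conclusion $\partial_t G_a<0$ go through as you describe.
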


The following lemma says that the kinetic energy is concentrated in a bounded domain when $\xi$ satisfies  {the monotonicity } condition \eqref{cond_sym}.  The proof is parallel to that of Lemma 4.9 if \cite{Choi_hill}.
\begin{lemma}\label{lem_stream_AR}
Let $\xi\in (L^1_w \cap L^{2}\cap L^{1})  (\mathbb{R}^{3})$ be an  axi-symmetric nonnegative function satisfying 
$\xi=\xi^*$. Then we have % \Red{should be modified}
\begin{equation}\label{est_stream_AR}%\color{blue}
\begin{aligned}
\int_{\mathbb{R}^{3}\backslash Q} \xi\mathcal{G}_a[\xi] \ud x 
\lesssim  \left(\frac{1}{\sqrt A}  +
 \frac{1}{R^{1-a}}   \right)  \left(\|\xi\|_{L^1\cap L^2}  +\|r^2\xi\|_{1} \right)^2, 
\end{aligned}
\end{equation} where
$$Q=Q_{A,R}=\{x\in\mathbb{R}^3\, |\,  \quad |z|<AR, \quad r <R\}$$
 provided 
$R\geq 1$ and $A\geq2$. %; the constant $C$ is a universal constant. 
\end{lemma}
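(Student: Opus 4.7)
The plan is to decompose $\mathbb{R}^3 \setminus Q = \Omega_1 \sqcup \Omega_2$ with $\Omega_1 := \{r \geq R\}$ and $\Omega_2 := \{r < R,\ |z| \geq AR\}$, establish a pointwise bound on $\psi = \mathcal{G}_a[\xi]$ on each piece, and then integrate $\xi\psi$ over each piece separately. On $\Omega_1$ no rearrangement information is needed: Lemma~\ref{lem_est_stream} gives $\psi(r,z) \lesssim r^{1+a}\|\xi\|_{L^{3/(a+1)}} + r^{2a}\|\xi\|_{L^{3/2}}$, and since both exponents lie in $[1,2]$, interpolation yields $\|\xi\|_{L^p} \lesssim \|\xi\|_{L^1 \cap L^2}$. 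On $\Omega_1$ one has $r^{1+a} \leq R^{a-1} r^2$ and $r^{2a} \leq R^{2a-2} r^2 \leq R^{a-1} r^2$ (using $R \geq 1$ and $a < 1$), so integrating against $\xi$ produces $R^{-(1-a)}\|\xi\|_{L^1 \cap L^2}\|r^2\xi\|_1$, which is absorbed into the claimed bound via $ab \leq (a+b)^2$.

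The main work is on $\Omega_2$, where I would first establish the sharper pointwise estimate
\begin{equation*}
\psi(r,z) \lesssim \left(\|\xi\|_{L^1 \cap L^2} + \|r^2\xi\|_1\right)\left(\frac{r^{1+a}}{\sqrt{A}} + \frac{1}{A} + r^2\left(\frac{A}{|z|}\right)^{5-2a}\right),
\end{equation*}
valid whenever $r \leq 2|z|/A$, $|z| > 0$, and $A \geq 2$ (a regime in force on $\Omega_2$ since $R \geq 1$ forces $r \leq R \leq |z|/A$). The derivation, which parallels \cite[Lemma 4.9]{Choi_hill} for the case $a=1$, splits $\psi$ according to $t := |(r,z)-(r',z')| < r/2$ (near field) or $\geq r/2$ (far field). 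For the near field the argument of Lemma~\ref{lem_est_stream} already yields a bound $r^{1+a}\|\xi \mathbf{1}_{\{t < r/2\}}\|_{L^{3/(a+1)}}$; the constraint $t < r/2$ combined with $r \leq 2|z|/A$ confines integration to the thin vertical slab $|z' - z| < |z|/A$, and the Steiner-monotonicity of $\xi(r',\cdot)$ together with the elementary bound $\int_{z-z/A}^{z+z/A} g\,d\sigma \leq (4/A)\|g\|_{L^1(0,\infty)}$ for non-increasing $g$ upgrades this to $(4/A)^{(a+1)/3}\|\xi\|_{L^{3/(a+1)}}$, which is at most $A^{-1/2}\|\xi\|_{L^1\cap L^2}$ since $a \geq 1/2$. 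For the far field, $G_a \lesssim (rr')^2/t^{5-2a}$ (from $\calF_a(s) \lesssim s^{a-5/2}$ in \eqref{est_F}), and the integral is sub-split according to whether $|z-z'|$ is below or above $|z|/A$: the in-slab piece reduces via $r' \leq 3t$ and $r \leq 2t$ together with $(r')^{2a-1} \leq 1+(r')^2$ to an integral that the monotone slab bound controls by $A^{-1}(\|\xi\|_1 + \|r^2\xi\|_1)$, while the out-of-slab piece uses $t \geq |z|/A$ directly to extract the $(A/|z|)^{5-2a}$ factor against $\|r^2\xi\|_1$.

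Once the pointwise estimate is in hand, integration of $\xi\psi$ over $\Omega_2$ is routine: the three summands contribute, in order, $A^{-1/2}(\|\xi\|_1 + \|r^2\xi\|_1)$ (using $r^{1+a} \leq 1+r^2$), $A^{-1}\|\xi\|_1$, and (using $|z| \geq AR$) $R^{-(5-2a)}\|r^2\xi\|_1$, each multiplied by the prefactor $\|\xi\|_{L^1 \cap L^2} + \|r^2\xi\|_1$. Since $R^{-(5-2a)} \leq R^{-(1-a)}$ for $R \geq 1$ and $A^{-1} \leq A^{-1/2}$ for $A \geq 2$, every term is absorbed into $(A^{-1/2} + R^{-(1-a)})(\|\xi\|_{L^1 \cap L^2} + \|r^2\xi\|_1)^2$. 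The hard part will be the refined pointwise bound on $\psi$: one must track carefully how the kernel concentrates near $z' \sim z$ so that Steiner monotonicity genuinely converts vertical localization into a power of $A^{-1/2}$, and balance the near/far split at $t \sim r/2$ so that the borderline powers of $s$ in $\calF_a$ do not produce any logarithmic loss at the threshold.
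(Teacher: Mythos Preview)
Your proposal is correct and follows essentially the same approach as the paper: the same decomposition $\mathbb{R}^3\setminus Q = \{r\geq R\}\cup\{r<R,\,|z|\geq AR\}$, the same use of Lemma~\ref{lem_est_stream} on the first piece, and on the second piece the same refined pointwise bound for $\psi$ (the paper records exactly your estimate $\psi \lesssim (\|\xi\|_{L^1\cap L^2}+\|r^2\xi\|_1)(r^{1+a}A^{-1/2}+A^{-1}+r^2(A/|z|)^{5-2a})$, derived via the near/far split at $t=r/2$ and the monotone slab inequality $\int_{s-s/A}^{s+s/A} g\leq (4/A)\|g\|_{L^1}$). The only cosmetic difference is that you apply the slab inequality directly to $|\xi|^{3/(a+1)}$ to get the factor $(4/A)^{(a+1)/3}\leq CA^{-1/2}$, whereas the paper first interpolates $\|\cdot\|_{L^{3/(a+1)}}\leq \|\cdot\|_{L^1}+\|\cdot\|_{L^2}$ and then applies the slab inequality to each summand separately; both routes give the same $A^{-1/2}$.
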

\begin{comment}
    \begin{proof}%[Proof of Lemma \ref{lem_stream_AR}]
       
We decompose

\begin{align*}
\int_{\mathbb{R}^{3}\backslash Q }\psi \xi \dd x
=\int_{r\geq R}+\int_{\substack{r<R, \\ |z|\geq AR}}=:I+II ,
\end{align*}\\
and estimate, by \eqref{est_psi}, % with $\delta=1$,
\begin{align*}
%\int_{r\geq R}\psi \xi \dd x
I\lesssim 
\left(\|\xi\|_{L^1\cap L^2}   \right) \int_{r\geq R}{(r^{1+a}+r^{2a})} \xi\dd x
\lesssim 
\left(\|\xi\|_{L^1\cap L^2}   \right) \int_{r\geq R}{(r^{1+a}+1)} \xi\dd x
\lesssim \left(\frac 1 {R^{1-a}}+\frac{1}{R^{2}}\right)\cdot\|\xi\|_{L^1\cap L^2}  \cdot \|r^2\xi\|_{1}.
\end{align*}\\
For $II$, since  $r<R$ and $|z|\geq AR$  imply $r\leq |z|/A$, applying \eqref{est_stream_sym} yields 

\begin{align*}
%\int\limits_{\substack{r<R, \\ |z|\geq AR}} \psi \xi\dd x
II
&\lesssim \left(\|\xi\|_{L^1\cap L^2}  +\|r^2\xi\|_{1} \right) \int\limits_{\substack{r<R, \\ |z|\geq AR}} 
\left(\frac{r^{1+a}}{\sqrt A} +
\frac{1}{  A} +
 r^2\left(\frac{A}{|z|}\right)^{5-2a}  \right)
 \xi \dd x \\
 &\lesssim \left(\|\xi\|_{L^1\cap L^2}  +\|r^2\xi\|_{1} \right) \int 
\left(\frac{r^{1+a}}{\sqrt A} +
\frac{1}{  A} +
 \frac{r^2}{R^{5-2a}}   \right)
 \xi \dd x \\
  &\lesssim \left(\frac{1}{\sqrt A}  +
 \frac{1}{R^{5-2a}}   \right)\cdot \left(\|\xi\|_{L^1\cap L^2}  +\|r^2\xi\|_{1} \right)^2.
\end{align*} Combining the above estimates, we obtain the conclusion \eqref{est_stream_AR}.
 
  \end{proof}
  \end{comment}

The following lemma ensures  convergence of the energy for any bounded  sequence in $(L^1_w \cap L^{2}\cap L^{1})$ satisfying {the monotonicity} condition \eqref{cond_sym} and converging weakly.

%lem3.5
\begin{lemma}\label{lem_energy_conv}
Let $\{\xi_n\}_{n=1}^\infty$ be a sequence of axi-symmetric non-negative functions on $\mathbb{R}^3$ such that 
\begin{align*}
&\xi_n=(\xi_n)^*\quad\mbox{for each}\quad n,\qquad \sup_{n  }\left\{||\xi_n||_{L^1\cap L^2}+||r^2\xi_n||_{1}  \right\}<\infty,\quad\mbox{and}\\
&\xi_n\rightharpoonup \xi\quad \textrm{in}\ L^{2}(\mathbb{R}^{3})\quad \textrm{as}\quad n\to\infty\quad \mbox{for some non-negative axi-symmetric}\quad \xi\in L^{2}(\mathbb{R}^{3}).
\end{align*} 
 Then we have convergence of  the energy: $E[\xi_{n}]\to E[\xi]$ as $n\to\infty.$
\end{lemma}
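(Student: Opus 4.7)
The plan is to combine a uniform tail bound from Lemma \ref{lem_stream_AR} with a compactness argument for the convolution operator restricted to a bounded domain, then split the resulting bilinear form to exploit weak--strong duality.

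First I would use the Steiner symmetry assumption $\xi_n=(\xi_n)^*$ together with the uniform bound on $\|\xi_n\|_{L^1\cap L^2}+\|r^2\xi_n\|_1$ to invoke Lemma \ref{lem_stream_AR}: for any $\varepsilon>0$ there exist $A,R\geq 2$ with
$$\int_{\mathbb{R}^3\setminus Q_{A,R}}\xi_n\,\mathcal{G}_a[\xi_n]\,dx<\varepsilon\quad\text{for all } n.$$
I then check that the weak limit $\xi$ also satisfies $\xi=\xi^*$ (symmetry in $z$ is preserved by testing weak convergence against $z$-antisymmetric functions, while the monotonicity in $|z|$ passes to the limit along an a.e.\ convergent subsequence extracted on bounded domains from the $L^2$ bound). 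By weak lower semicontinuity of norms, $\xi$ inherits the uniform bounds and the same tail estimate. Feeding these into Lemma \ref{lem_en_diff} with $U=Q_{A,R}$ reduces the problem to proving
$$I_n:=\int_U\!\!\int_U G_a(x,\bar x)\bigl(\xi_n(x)\xi_n(\bar x)-\xi(x)\xi(\bar x)\bigr)\,d\bbx\,d\bar\bbx\ \to\ 0.$$

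Next I would set up the integral operator $T_U\eta(x):=\int_U G_a(x,\bar x)\eta(\bar x)\,d\bar\bbx$ on $L^2(U)$. On the bounded axi-symmetric region $U$, the relevant 3D kernel behaves like the Riesz kernel of order $2a$, namely $|x-\bar x|^{-(3-2a)}$; since $2a>1$, fractional integration maps $L^2(U)$ continuously into $H^{2a}(U)$, and by Rellich--Kondrachov the composition into $L^2(U)$ is compact. Hence $T_U\xi_n\to T_U\xi$ strongly in $L^2(U)$. Writing
$$I_n=\int_U(\xi_n-\xi)\,T_U\xi_n\,d\bbx+\int_U\xi\,T_U(\xi_n-\xi)\,d\bbx,$$
the first integral vanishes by weak--strong duality ($\xi_n-\xi\rightharpoonup 0$ in $L^2$ against strongly convergent $T_U\xi_n$), and the second vanishes directly by weak convergence tested against the fixed $L^2$ function $T_U\xi$. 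Sending $\varepsilon\to 0$ completes the proof.

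The main obstacle is the compactness of $T_U$: one has to confirm that the axi-symmetric kernel $G_a(x,\bar x)$ really is controlled on $U\times U$ by the 3D Riesz kernel $|x-\bar x|^{-(3-2a)}$ (with the full Euclidean measure), which is where the restriction $a>1/2$ is crucial, since otherwise the order of smoothing is too weak to yield compactness on $L^2$. A secondary subtlety is the preservation of Steiner symmetry under weak $L^2$ limits, but this is a standard closure property once the monotonicity is recast via testing against radially monotone test functions.
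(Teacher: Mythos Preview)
Your proposal is correct and follows the same overall architecture as the paper (tail control via Lemma~\ref{lem_stream_AR}, reduction via Lemma~\ref{lem_en_diff} to the bounded region $Q=Q_{A,R}$), but handles the core bounded-domain term differently. The paper observes directly from the estimate \eqref{est_F} that $G_a\in L^2_{\mathrm{loc}}(\mathbb{R}^3\times\mathbb{R}^3)$ for $a>\tfrac12$ and then uses the elementary fact that $\xi_n\otimes\xi_n\rightharpoonup\xi\otimes\xi$ in $L^2(Q\times Q)$, so the double integral against $G_a$ passes to the limit in one stroke. Your route via compactness of $T_U$ and the weak--strong splitting is equally valid and more operator-theoretic; note that the $L^2_{\mathrm{loc}}$ kernel bound the paper uses already makes $T_U$ Hilbert--Schmidt, hence compact, so your Sobolev/Rellich justification, while correct, is more machinery than strictly needed.

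One technical caution about your stated justification: the axisymmetric kernel $G_a(\bbx,\bar\bbx)$, being constant in the angular variables, is \emph{not} pointwise dominated by $|\bbx-\bar\bbx|^{-(3-2a)}$ on $U\times U$ (it is singular on the whole codimension-two set $\{(r,z)=(\bar r,\bar z)\}$). The right way to salvage your Sobolev argument is to factor $T_U$ as $\xi\mapsto r\,\bigl[(-\Delta)^{-a}(r\xi\mathbf 1_U\,e_\theta)\bigr]_\theta$ and use that $(-\Delta)^{-a}$ maps compactly supported $L^2$ into $H^{2a}_{\mathrm{loc}}$, with multiplication by the bounded function $r$ on $Q_{A,R}$ harmless; then Rellich gives compactness into $L^2(U)$. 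Alternatively, and more simply, cite the paper's $G_a\in L^2(Q\times Q)$ to get Hilbert--Schmidt compactness directly.
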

   \begin{proof}%[Proof of Lemma \ref{lem_energy_conv}]
 %  \Red{exactly same proof}\\
  First, we observe, by the weak convergence $\xi_n\rightharpoonup \xi$ in $L^{2}(\mathbb{R}^{3})$, % and by the non-negativity of functions $\xi_n$, $\xi$,
$$  \|\xi\|_{L^1\cap L^2}+\|r^2\xi\|_{1}\leq C\quad \mbox{for some }\, C>0.$$
We set a bounded domain
$$Q=Q_{A,R}=\{x\in\mathbb{R}^3\, |\,  \quad |z|<AR, \quad r <R\}.$$
  for $R\geq 1$ and $A\geq1$.
  Then, by \eqref{est_en_diff} of Lemma \ref{lem_en_diff}, we have
  \begin{equation*}\begin{split}%\label{est_en_diff}
\left| E[\xi_n]-E[\xi]\right|
&\leq  \frac{1}{4\pi}\left| \int_{{  Q } }\int_{{  Q}  }  {G}_a(x,y)\Big(\xi_n(x)\xi_n(y)-\xi(x)\xi(y)\Big)\,dxdy \right|
+ \int_{\mathbb{R}^{3}\setminus {Q }}\xi_n\mathcal{G}_a[\xi_n]\,dx 
+  \int_{\mathbb{R}^{3}\setminus {Q }}\xi\mathcal{G}_a[\xi]\,dx.
\end{split}\end{equation*}
Since  $\xi_n$  satisfies {the monotonicity }condition \eqref{cond_sym} for each $n\geq 1$, so does $\xi$. Thus
   we can estimate, by \eqref{est_stream_AR} of Lemma \ref{lem_stream_AR},
   \begin{equation*} 
\begin{aligned}
&\int_{\mathbb{R}^{3}\backslash Q}\xi\mathcal{G}_a[\xi]\ud x 
\lesssim  \left(\frac{1}{\sqrt A}  +
 \frac{1}{R^{1-a}}   \right)\quad\mbox{and}\quad \sup_n\int_{\mathbb{R}^{3}\backslash Q}\xi_n\mathcal{G}_a[\xi_n]\ud x 
\lesssim \left(\frac{1}{\sqrt A}  +
 \frac{1}{R^{1-a}}   \right).
\end{aligned}
\end{equation*}
On the other hand, we can simply check that the kernel $G_a(\bbx,\bar\bbx)=G_a(r,z,\bR,\bz)$ is locally square integrable
$$G_a\in L^2_{loc}(\mathbb{R}^3\times\mathbb{R}^3)$$ 
due to
the estimate \eqref{est_F}. Thus thanks to the weak convergence
%Since $G_a(x,y)\in L^{2}(Q\times Q)$ by Lemma \ref{lem_g_l_2}  and
 $\xi_n(x)\xi_n(y)\rightharpoonup\xi(x)\xi(y)$ in $L^{2}(Q\times Q)$, sending $n\to\infty$ and $A,R\to\infty$ imply  convergence of the energy. \end{proof}
Finally, we show the existence of a maximizer   in $K_\mu'$.
  \begin{lemma}\label{lem_exist}
  Let $0<\mu<\infty$. Then
 $\mathcal{S}'_{\mu}\neq \emptyset.$
\end{lemma}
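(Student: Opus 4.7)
My plan is the direct method of the calculus of variations: pick a maximizing sequence in the relaxed class $K_\mu'$, symmetrize, extract a weak $L^2$ limit, and check that this limit still lies in $K_\mu'$ and achieves $I_\mu'$. The reason for working with the relaxed class (with inequality constraints) rather than $K_\mu$ itself is precisely that only inequalities survive a weak limit.

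First I would take $\{\xi_n\}\subset K_\mu'$ with $E_2[\xi_n]\to I_\mu'$ and replace each $\xi_n$ by its Steiner symmetrization $\xi_n^*$ about $\{z=0\}$. Lemma~\ref{lem_steiner} guarantees $\xi_n^*\in K_\mu'$, preserves the $L^2$ norm, and satisfies $E[\xi_n^*]\geq E[\xi_n]$, so $E_2[\xi_n^*]\geq E_2[\xi_n]$; hence we may assume each $\xi_n$ already satisfies the monotonicity condition \eqref{cond_sym}. Next, the argument of Lemma~\ref{lem_l2_unif} (which relies only on the bound \eqref{est_e_by_l2}, valid on $K_\mu'$) yields $\sup_n\|\xi_n\|_{L^2}<\infty$; combined with $\xi_n\in K_\mu'$ this gives a uniform bound in $L^1\cap L^2\cap L^1_w$. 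Passing to a subsequence, $\xi_n\rightharpoonup \xi$ weakly in $L^2(\mathbb{R}^3)$ for some non-negative axi-symmetric $\xi\in L^2$.

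To verify $\xi\in K_\mu'$, I would pair $\xi_n$ with $\mathbf{1}_{B_R}$ and $r^2\mathbf{1}_{B_R}$ (both in $L^2(\mathbb{R}^3)$), use the weak $L^2$ convergence, and then let $R\to\infty$ by monotone convergence on the left. Non-negativity then gives $\int\xi\leq \liminf\int\xi_n\leq 1$ and $\tfrac12\int r^2\xi\leq \liminf\tfrac12\int r^2\xi_n\leq \mu$. For the functional itself, I invoke Lemma~\ref{lem_energy_conv}, whose hypotheses hold along $\{\xi_n\}$ (uniform $L^1\cap L^2\cap L^1_w$ bound, axi-symmetry, and $\xi_n=\xi_n^*$), to obtain $E[\xi_n]\to E[\xi]$. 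Combining this with weak $L^2$ lower semicontinuity of $\|\cdot\|_{L^2}^2$ yields
\[
E_2[\xi]=E[\xi]-\|\xi\|_{L^2}^2\geq \lim_n E[\xi_n] - \liminf_n \|\xi_n\|_{L^2}^2 = \limsup_n E_2[\xi_n] = I_\mu',
\]
which together with $\xi\in K_\mu'$ forces $E_2[\xi]=I_\mu'$. Since $I_\mu'\geq I_\mu>0$ by Lemma~\ref{lem_nontrivial_extra}, the maximizer $\xi$ is nontrivial, and $S_\mu'\neq\emptyset$.

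The delicate step is the convergence $E[\xi_n]\to E[\xi]$: weak $L^2$ convergence is far too weak on its own, since the kernel $G_a$ is non-local and the domain unbounded, so without extra control mass could escape to infinity and take energy with it. The Steiner symmetrization supplies the tightness needed along the $z$ direction, and Lemma~\ref{lem_stream_AR} quantifies this by confining the bulk of the energy of each $\xi_n=\xi_n^*$ to a fixed bounded cylinder. This is exactly what Lemma~\ref{lem_energy_conv} packages; the hard work having been done there, the task reduces to checking hypotheses.
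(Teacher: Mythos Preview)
Your proposal is correct and follows essentially the same route as the paper: symmetrize a maximizing sequence in $K_\mu'$, extract a weak $L^2$ limit (using the uniform $L^2$ bound from the argument of Lemma~\ref{lem_l2_unif}, which you rightly note applies on $K_\mu'$ via \eqref{est_e_by_l2}), invoke Lemma~\ref{lem_energy_conv} for energy convergence, and close with weak lower semicontinuity of the $L^2$ norm. Your additional justification that the weak limit remains in $K_\mu'$ and your remark on nontriviality via $I_\mu'>0$ are welcome details the paper leaves implicit.
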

\begin{proof}%[Proof of Lemma \ref{lem_exist}]
%By the scaling \eqref{scaling}, we reduce to the case $0<\mu<\infty$, $\nu=\lambda=1$. 
Let $\{\xi_n\}\subset K'_{\mu}$ be a   sequence  satisfying $E_2[\xi_n]\to \mathcal{I}'_\mu$. By the Steiner symmetrization (Proposition \ref{lem_steiner}), we 
obtain the corresponding sequence $\{\xi_n^*\}$ in $K'_\mu$ satisfying {the monotonicity }condition \eqref{cond_sym}.
%may assume that $\xi_n$ satisfies satisfies {the monotonicity }condition \eqref{cond_sym}.
By the energy inequality, we know
$$
I_\mu'\geq E_2[\xi_n^*]=E[\xi_n^*]-\|\xi_n^*\|_{L^2(\mathbb{R})^3}^2
\geq  E[\xi_n]-\|\xi_n\|_{L^2(\mathbb{R})^3}^2=E_2[\xi_n].
$$ Then, taking the limit $n\to \infty$, we get
$E_2[\xi_n^*]\to \mathcal{I}'_\mu$.
 Since $\{\xi^*_n\}$ is uniformly bounded in $L^{2}(\mathbb{R}^3)$ by Lemma \ref{lem_l2_unif}, % Remark \ref{rem_interpol}, 
by choosing a subsequence if necessary  (still denoted by $\{\xi^*_n\}$ for simplicity), there exists a non-negative axi-symmetric function $\xi=\xi^*\in L^{2}(\mathbb{R}^3)$ satisfying
%{the monotonicity }condition \eqref{cond_sym} with 
$\xi_n^*\rightharpoonup \xi$ in $L^{2}$. % and $||\omega||_2\leq \liminf_{n\to\infty}||\omega_n||_2$. 
%By the uniqueness of a weak-limit, the function  $\xi$ is axi-symmetric. 
%Moreover it   satisfies {the monotonicity }condition \eqref{cond_sym}. 
Hence
 %Since $\{\xi_n\}$ satisfies all the assumptions of Lemma \ref{lem_energy_conv},
  we can apply Lemma \ref{lem_energy_conv} for $\{\xi^*_n\}$ to get 
  \begin{align*}
\lim_{n\to \infty}E[\xi^*_n]=E[\xi].
\end{align*} 
  %the convergence
  Since the weak-limit $\xi$ lies on $K'_{\mu}$, we know
$E_2[\xi]\leq I'_\mu.$ On the other hand, by taking $\limsup_{n\to\infty}$ to $$E_2[\xi_n^*]=
E[\xi_n^*]-\|\xi_n^*\|_{L^2(\mathbb{R}^3)}^2
,$$ we get
$$
I_\mu'=
E[\xi]-\liminf_{n\to\infty}\|\xi_n^*\|_{L^2(\mathbb{R}^3)}^2
\leq E[\xi]-\|\xi\|_{L^2(\mathbb{R}^3)}^2=E_2[\xi].
$$ It proves $\xi\in S_\mu'$.  
\end{proof}

The next task is to show that the maximizers in $K_\mu'$  lies on the smaller class $K_\mu$. In other words, we claim
$$\emptyset\neq S_\mu'\subset S_\mu,$$ which implies 
$I'_\mu=I_\mu$ (so $S_\mu'= S_\mu$).

 \begin{lemma}\label{lem_same_max}
  Let $0<\mu<\infty$. Then
 $${S}'_{\mu}={S}_{\mu}\neq \emptyset\quad\mbox{and}\quad I'_\mu=I_\mu.$$
\end{lemma}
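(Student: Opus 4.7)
My approach would be to exploit the strict monotonicity of $\mu \mapsto I_\mu$ already proved in Lemma \ref{lem_nontrivial_extra} to force every maximizer in $K_\mu'$ to saturate the impulse constraint $\tfrac{1}{2}\int r^2\xi\, \ud \bbx = \mu$. Once this is done, such a maximizer lies in $K_\mu$ and the two variational problems must coincide. Note that the inclusion $K_\mu \subset K_\mu'$ is immediate from the definitions, so the easy direction $I_\mu \leq I_\mu'$ (and $S_\mu \subset S_\mu'$ once $I_\mu = I_\mu'$ is known) is free; the content of the lemma is really the reverse inclusion $S_\mu' \subset S_\mu$.

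The main step will be a short dichotomy. I would invoke Lemma \ref{lem_exist} to pick any $\xi \in S_\mu'$ and set $\mu'' := \tfrac{1}{2}\int_{\mathbb{R}^3} r^2 \xi(\bbx)\, \ud \bbx \leq \mu$. The plan is to rule out both $\mu'' = 0$ and $0 < \mu'' < \mu$. In the first case, $\mu'' = 0$ forces $\xi \equiv 0$ a.e., so $E_2[\xi] = 0$, whereas \eqref{est_i_nontrivial} combined with $I_\mu \leq I_\mu'$ yields $I_\mu' \geq I_\mu > 0$, a contradiction. In the second case, $\xi$ itself lies in $K_{\mu''}$, so $E_2[\xi] \leq I_{\mu''}$, while the strict monotonicity \eqref{est_i_strict_mono} gives $I_{\mu''} < I_\mu \leq I_\mu' = E_2[\xi]$, once again a contradiction.

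Consequently $\mu'' = \mu$ and $\xi \in K_\mu$. Then the chain $E_2[\xi] \leq I_\mu \leq I_\mu' = E_2[\xi]$ collapses to equality throughout, giving $I_\mu = I_\mu'$ and $\xi \in S_\mu$, which establishes $S_\mu' \subset S_\mu$. The reverse inclusion $S_\mu \subset S_\mu'$ is then trivial: any $\xi \in S_\mu$ satisfies $\xi \in K_\mu \subset K_\mu'$ and $E_2[\xi] = I_\mu = I_\mu'$, hence $\xi \in S_\mu'$.

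The hard work needed for this argument has already been done in Lemma \ref{lem_nontrivial_extra}, where strict monotonicity was obtained via the scaling $\xi^\sigma(\bbx) = \sigma^{5/2+a}\xi(\sigma \bbx)$ together with a contradiction argument exploiting the $L^2$-penalty in $E_2$. Given that result, the present lemma is essentially a one-line dichotomy, and I do not anticipate any additional technical obstacle.
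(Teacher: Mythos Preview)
Your argument is correct and is actually cleaner than the paper's. You reduce the lemma to the strict monotonicity \eqref{est_i_strict_mono} already established in Lemma \ref{lem_nontrivial_extra}: if $\xi\in S_\mu'$ had impulse $\mu''<\mu$, then $\xi\in K_{\mu''}$ forces $E_2[\xi]\le I_{\mu''}<I_\mu\le I_\mu'=E_2[\xi]$, a contradiction. This is a tidy reuse of existing machinery.

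The paper takes a genuinely different route. Rather than invoking \eqref{est_i_strict_mono}, it constructs directly, from a hypothetical $\xi\in S_\mu'$ with impulse $\mu_0<\mu$, a competitor $\xi_\tau$ obtained by translating the support radially outward by $\tau>0$ (more precisely $\xi_\tau(r,z)=\tfrac{r-\tau}{r}\xi(r-\tau,z)$ for $r\ge\tau$). This keeps the mass and the $L^2$ norm controlled, raises the impulse to exactly $\mu$ for a suitable $\tau$, and \emph{strictly increases} the energy $E$ thanks to the explicit monotonicity of the kernel $G_a$ in \eqref{eq:psi-origin}. One then gets $I_\mu\ge E_2[\xi_\tau]>E_2[\xi]=I_\mu'$, contradicting $I_\mu\le I_\mu'$. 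So the paper's proof is self-contained (it does not need the scaling argument behind \eqref{est_i_strict_mono}) and highlights a geometric mechanism---moving vorticity away from the axis increases energy---that is useful elsewhere. Your approach, by contrast, is shorter and makes the logical dependence on Lemma \ref{lem_nontrivial_extra} transparent; it would be the natural choice once strict monotonicity is already on record.
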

\begin{proof} 
The proof is parallel to the case $a=1$ in \cite{Choi_hill}, which we sketch now. Let us take any  $\xi\in \mathcal{S}'_{\mu}$ by recalling
$\mathcal{S}'_{\mu}\neq \emptyset$ from Lemma \ref{lem_exist}. We claim  
\begin{equation}\label{claim_iden_tilde}
  \xi\in K_{\mu}.
\end{equation}
 For a contradiction,  % n other words, we need to show $$\frac 1 2 \int r^2 \xi =\mu.$$
%For a contradiction, 
we suppose 
\begin{equation*}\label{assum1} 
\mu>\frac 1 2 \int_{\mathbb{R}^3} r^2 \xi \dd \bbx=:\mu_0,\end{equation*} i.e. we assume $\xi \in K'_\mu\setminus K_\mu$.   
From ${I}'_\mu>0$ by \eqref{est_i_nontrivial} in  Lemma \ref{lem_nontrivial_extra}, we know
$\xi\nequiv 0$ so 
$\mu_0>0$. %Then we get $$\xi\in \mathcal{S}'_{\mu_0}$$ thanks to $\mathcal{P}'_{\mu_0}\subset \mathcal{P}''_{\mu}$.\\
 If we define
 the (relative) translation ${\xi_\tau}$ of $\xi$ away from $z-$axis  for $\tau>0$
  by
\begin{equation*}\label{tau_trans}
 \xi_\tau(r,z)=\begin{cases}&\frac{r-\tau}{r}\xi(r-\tau,z)\quad\mbox{for}\quad r\geq\tau,\\
&0\quad\mbox{for}\quad 0< r<\tau,
\end{cases}\end{equation*} then we have
% \begin{equation}\begin{split}\label{trans_comp}  &\| \xi_\tau\|_1=2\pi\int_{\Pi} r \xi_\tau drdz=2\pi\int_{r\geq \tau} (r-\tau) \xi(r-\tau,z) drdz=2\pi\int_{\Pi} r \xi drdz=\|\xi\|_1\leq 1,\\&\frac 1 2 \int r^2\xi_\tau\,\dd\bbx =\pi   \int_{r\geq\tau} (r-\tau) r^2  \xi(r-\tau,z) drdz=\pi   \int_{\Pi}   (r+\tau)^2 r \xi drdz=\mu_0+\frac 1 2    \int  (2\tau r+\tau^2)  \xi \dd\bbx.\end{split}\end{equation} Thus  we can take 
some constant $\tau>0$ such that %$ \xi_\tau$ defined above satisfies 
  $ \frac 1 2 \int r^2\xi_\tau\,dx=\mu,$ 
\textit{i.e.}  we have $ \xi_\tau\in K_\mu$.  On the other hand, we observe $$E[ \xi_\tau]> E[\xi]$$  by exploiting  the form 
\eqref{eq:psi-origin} of the kernel $G_a$.
\begin{comment} Indeed, we observe, for $(r,z),(r',z')\in\Pi$,
 \begin{equation}\begin{split}\label{exploit} 
 G_a(r+\tau,z,r'+\tau,z') &=
 {\left((r+\tau)(r'+\tau)\right)^{a-\frac 1 2}}\,F_a\left(\frac{(r-r')^2+(z-z')^2}{(r+\tau)(r'+\tau)}\right)\\
 &
 >  ({rr'})^{a-\frac 1 2}\,F_a\left(\frac{(r-r')^2+(z-z')^2}{rr'}\right)=G_a(r,z,r',z')
 \end{split}\end{equation} %\Red{$a>\frac 1 2 $ is used }
  because
$F(\cdot)$ is strictly decreasing due to $F_a'<0
 $.   Thus, we have
 \begin{equation}\begin{split}\label{exploit2}& E[ \xi_\tau]
 =C_a\iint_{\Pi^2} rr'G_a(r,z,r',z')\xi_\tau(r',z')\xi_\tau(r,z)dr'dz'drdz\\
 &
 =C_a\iint_{r>\tau,\, r'>\tau}  (r-\tau)(r'-\tau)G_a(r,z,r',z')\xi (r'-\tau,z') \xi (r-\tau,z)dr'dz'drdz\\
  &
 =C_a\iint_{\Pi^2} rr'G_a(r+\tau,z,r'+\tau,z')\xi (r',z')\xi (r,z)dr'dz'drdz\\
  & >C_a\iint rr'G(r,z,r',z')\xi(r',z')\xi(r,z)dr'dz'drdz=  E[ \xi],
\end{split}\end{equation} where the last inequality comes from
 \eqref{exploit} and non-triviality of $\xi\geq 0$. % \nequiv 0$.
% \color{black}
\end{comment}
Hence we get 
$$ I_\mu\geq E[ {\xi}_\tau]>E[\xi]=I'_\mu,$$ which contradicts to
   $I_{\mu}\leq  I'_{\mu}$ obtained from 
\eqref{est_i_nontrivial}. Hence  $\mu_0=\mu$ so we get
the claim \eqref{claim_iden_tilde}.   
   % $\xi$ should lie on $\mathcal{P}'_{\mu}$. 
As a consequence, we get
 $  \mathcal{I}_{\mu}=\mathcal{I}'_{\mu}$ and 
  $   \mathcal{S}_{\mu}= \mathcal{S}'_{\mu}.$ \end{proof}

Now we are ready to prove Proposition \ref{prop_prop}, which implies Theorem \ref{thm_TW_exist}. 

\begin{proof}[Proof of Proposition \ref{prop_prop}]
We first prove the relation \eqref{rel_xi_psi}. We present a proof for completeness while the proof is essentially  parallel to that of 
\cite[Proposition 2.5]{AC_lamb}.\\

We take any maximizer $\xi\in S_\mu$ whose construction  is guaranteed by Lemmas \ref{lem_same_max} and \ref{lem_exist}.
We note that for each $r>0$, $\xi(r,\cdot_z)$ is nonincreasing as a function of $z>0$ and 
$\xi(r,z)=\xi(r,-z)$.
 Since $\xi\geq0$ is non-trivial, we can take $\delta_0>0$ satisfying $|\{\bbx\in \mathbb{R}^{3}\ |\ \xi\geq \delta_0\}|>0,$ where $|\cdot|$ denotes the Lebesgue measure in $\mathbb{R}^3$.
 Then we take compactly supported axi-symmetric $h_1,h_2\in L^{\infty}(\mathbb{R}^3)$ such that $\textrm{spt}\ h_i\subset \{\xi\geq \delta_0\}$, $i=1,2$,  
\begin{align*}
\int_{\mathbb{R}^{3}}h_1(\bbx)\dd \bbx=1,\quad \int_{\mathbb{R}^{3}}r^2h_1(\bbx)\dd \bbx=0, \quad \mbox{and} \quad \int_{\mathbb{R}^{3}}h_2(\bbx)\dd \bbx=0,\quad \int_{\mathbb{R}^{3}}r^2h_2(\bbx)\dd \bbx=1.
\end{align*}
Consider any 
 $\delta\in (0,\delta_0)$ and compactly supported axi-symmetric $h\in L^{\infty}(\mathbb{R}^{3})$ such that $h\geq 0$ on $\{0\leq \xi\leq \delta\}$. We set
\begin{align*}
\eta:=h-\left(\int_{\mathbb{R}^{3}}h(\bbx) \dd \bbx  \right)h_1-\left(\int_{\mathbb{R}^{3}}r^2 h(\bbx) \dd \bbx \right)h_2
\end{align*}\\
so that $\int_{\mathbb{R}^{3}} \eta(\bbx)\dd \bbx=0$ and $\int_{\mathbb{R}^{3}} r^2\eta(\bbx)\dd \bbx=0$. Observe that $\xi+\varepsilon\eta\geq \delta-\varepsilon||\eta||_{L^\infty}\geq 0$ on $\{\xi\geq \delta\}$ for small $\varepsilon>0$. Since $\eta=h\geq 0$ on $\{0\leq \xi\leq \delta\}$, $\xi+\varepsilon\eta\geq 0$ on $\{0\leq \xi\leq \delta\}$. Hence $\xi+\varepsilon\eta\in K_{\mu}$  for small $\varepsilon>0$. Thus we have
$$
\frac{E_2[\xi]-E_2[\xi+\varepsilon\eta]}{\varepsilon}\geq 0\quad \mbox{for any small} \quad \varepsilon>0.
$$
By using the definition of $E_2$ and by taking the limit on $\varepsilon\searrow  0$, we obtain
$$
\int_{\mathbb{R}^3}(\psi-\xi)\eta\dd \bbx\leq 0.
$$ %\Red{correct energy is coeeficient constant 1}
By the definition of $\eta$, we get
$$
\int_{\mathbb{R}^3}(\psi-\xi)h\dd\bbx -\left(\int_{\mathbb{R}^{3}}h(\bbx) \dd \bbx  \right)\int_{\mathbb{R}^3}(\psi-\xi)h_1\dd\bbx-\left(\int_{\mathbb{R}^{3}}r^2 h(\bbx) \dd \bbx \right)\int_{\mathbb{R}^3}(\psi-\xi)h_2\dd \bbx\leq 0.
$$
By setting $\gamma:=\int_{\mathbb{R}^3}(\psi-\xi)h_1\dd\bbx$ and $\frac 1 2 W:=\int_{\mathbb{R}^3}(\psi-\xi)h_2\dd\bbx$, we get
$$
0\geq 
%\int_{\mathbb{R}^3}(\psi-\xi)h\dd\bbx -\left(\int_{\mathbb{R}^{3}}h(\bbx) \dd \bbx  \right)\gamma-\left(\int_{\mathbb{R}^{3}}r^2 h(\bbx) \dd \bbx \right)\frac 1 2 W=
\int_{\mathbb{R}^3}(\psi-\frac 1 2 W r^2-\gamma-\xi)h\dd\bbx 
=\int_{\{0\leq\xi\leq \delta\}}\dots\dd\bbx+\int_{\{\xi>\delta\}}\dots\dd\bbx.
$$
 We set $\Psi:=\psi-\frac 1 2Wr^2-\gamma$. Since $h$ is an arbitrary function satisfying $h\geq 0$ on $\{0\leq \xi\leq \delta\}$,

\begin{equation*}
\begin{aligned}
\Psi-\xi&=0\quad \textrm{on}\ \{\xi>\delta\},\\
\Psi-\xi&\leq 0\quad \textrm{on}\ \{0\leq \xi\leq \delta\}. 
\end{aligned}
\end{equation*}\\
Since $\delta>0$ is arbitrary, sending $\delta\to 0$ implies 

\begin{equation*}
\begin{aligned}
\Psi-\xi&=0\quad \textrm{on}\ \{\xi>0\},\\
\Psi&\leq 0\quad \textrm{on}\ \{\xi=0\}.
\end{aligned}
\end{equation*}It implies   $\xi=\Psi_+=(\psi-\frac 1 2Wr^2-\gamma)_+$.\\

To show $W>0$, we first compute
 \begin{equation*}\begin{split}
0=-\int_{\{\xi>0\}}\xi\partial_r\xi\dd r\dd z  =-\int_{\{\xi>0\}}\Psi\partial_r\xi\dd r \dd z= \int_{\{\xi>0\}}(\partial_r\Psi)\xi\dd r \dd z=
 \int_{\{\xi>0\}}(\partial_r\psi)\xi\dd r \dd z-
  \int_{\{\xi>0\}}(Wr)\xi\dd r \dd z
\end{split} \end{equation*} so that we get
$$
 W\int_{\mathbb{R}^3}\xi\dd \bbx
 %= \int_{\{\xi>0\}}(\partial_r\psi)\xi\dd r \dd z
 = \int_{\mathbb{R}^3}(\frac{\partial_r\psi}{r})\xi\dd\bbx
 =\int_{\mathbb{R}^3}v^z\xi\dd\bbx
$$
Thus to obtain $W>0$, it is enough to show 
$\int_{\mathbb{R}^3}v^z\xi\dd\bbx>0$.
From the representation of $v^z$ of \eqref{eq:uz-G},
\begin{equation*} 
\begin{split}
\int_{\mathbb{R}^3}v^z\xi\dd\bbx&=	 C_a \int_{\Pi}   \int_{\Pi} \left(  \frac{2(r-\bR)}{  r  (r\bR)^{\frac32-a}} \calF'(s) + \frac{(r\bR)^{a-\frac12}}{ r^2}\left[(a-\frac 12) \calF(s)-s \calF'(s) \right] \right) \xi(\bR,\bz) \bR\, \xi(r,z) r \ud \bR \ud \bz\ud r \ud z=I+II.\\
\end{split}
\end{equation*} 
For I, using symmetrization, we get
\begin{equation*} 
\begin{split}
I&=  C_a \int_{\Pi}   \int_{\Pi} \left(  \frac{2(r-\bR)}{  r  (r\bR)^{\frac32-a}} \calF'(s)   \right) \xi(\bR,\bz) \bR\, \xi(r,z) r \ud \bR \ud \bz\ud r \ud z\\&=
 C_a \int_{\Pi}   \int_{\Pi} {\left(  \frac{-(r-\bR)^2}{  r\bR  (r\bR)^{\frac32-a}} \underbrace{\calF'(s)}_{<0}   \right)} \underbrace{\xi(\bR,\bz) \bR\, \xi(r,z) r}_{\geq 0} \ud \bR \ud \bz\ud r \ud z>0.
\end{split}
\end{equation*} 
For II, we have
\begin{equation*} 
\begin{split}
II= 	 C_a \int_{\Pi}   \int_{\Pi} \left(    \frac{(r\bR)^{a-(1/2)}}{ r^2} {\left[\underbrace{(a-(1/2)) \calF(s)}_{>0}-\underbrace{s \calF'(s)}_{<0} \right]} \right) \underbrace{\xi(\bR,\bz) \bR\, \xi(r,z) r}_{\geq 0} \ud \bR \ud \bz\ud r \ud z>0.
\end{split}
\end{equation*} We obtained $W>0$.\\

To show boundedness of $\xi$, we recall that $W>0$ and 
$\xi=\Psi_+=(\psi-\frac 1 2Wr^2-\gamma)_+$. Thanks to the estimate
$\psi\lesssim( r^{1+a}+r^{2a})$ from \eqref{est_psi} of Lemma \ref{lem_est_stream}, we conclude that there exists some $R>1$ (depending on $W>0$ and $\mu>0$) such that
$$\mbox{spt }\xi\subset \{x\in\mathbb{R}^3\,|\, r\leq R\}.$$ It also implies $\|\psi\|_{L^\infty(
\{x\in\mathbb{R}^3\,|\, r\leq R\}
)}\lesssim R^{1+a}<\infty.$ Thus $\xi\in L^\infty(\mathbb{R}^3)$ from the relation $\xi =(\psi-\frac 1 2Wr^2-\gamma)_+$.
We note that $\psi$ (so $\Psi$) is continuous by Lemma \ref{lem_est_psi_con}, so is $\xi=\Psi_+$ (after possibly being redefined on a set of measure zero).\\

To show $\gamma\geq 0$, let's assume $\gamma <0$. By taking a  sequence $\{(r_n,z_n)\}$ such that 
$$\xi(r_n,z_n)\to 0, \quad r_n\to 0,
\quad \mbox{and} \quad z_n\to\infty\quad\quad\mbox{as $n$ goes to infinity}.$$ 
Since $|\psi|\lesssim (r^{1+a}+r^{2a})$ from Lemma \ref{est_psi}, we get
$$
|\psi-(1/2)Wr^2|\leq |\psi|+|(1/2)Wr^2|\to 0 \quad\quad \mbox{as $n$ goes to infinity}.
$$ Thus 
\begin{align*}
\lim_{n\to\infty}\left(\psi(r_n,z_n)-\frac 1 2 W r_n^2-\gamma\right)=-\gamma>0,
\end{align*} which contradicts to $\xi(r_n,z_n)\to 0$ with $\xi=\Psi_+$. Hence, we get $\gamma>0$.\\

Now we show uniqueness of the pair $(W,\gamma)\in \mathbb{R}^2$.  Suppose $\xi=(\psi-\frac 1 2 W r^2 -\gamma)_+=(\psi-\frac 1 2 W' r^2 -\gamma')_+$ for some $W,W',\gamma,\gamma'$. Thus, on the   set $\{\xi>0\}$ of positive measure, we have 
$$\psi-\frac 1 2 W r^2 -\gamma=\psi-\frac 1 2 W' r^2 -\gamma'$$ which gives 
$$\frac 1 2 (W'-W) r^2 =\gamma  -\gamma'$$ on the set. Thus we have the uniqueness: $W=W'$ and $\gamma=\gamma'$.\\

% \Red{remains to show     $\xi$ is   Lipshitz continuous, and compactly supported in $\mathbb{R}^3$}\\
  To show Lipschitz continuity of $\xi$, it's enough to show $\psi_R:=\psi|_{r<R},$ which  is the 
restriction of $\psi$ on the set $\{x\in\mathbb{R}^3\,|\, r\leq R\}
$, 
is Lipschitz on
$\{x\in\mathbb{R}^3\,|\, r\leq R\}
$. 
We recall $\Phi=(\phi_1,\phi_2,\phi_3)\in C^{1,\alpha}$ for some $\alpha>0$ from the regularity \eqref{Phi_reg} in  Lemma \ref{lem_est_psi_con}.  Since 
\begin{equation*}\label{psi_from_phi}
 \psi(r,z)=r  \phi_2(r e_{x_1}+z e_{x_3}),\quad (r,z)\in \Pi,
\end{equation*}  we get $\psi_R\in  C^{1,\alpha}(\{x\in\mathbb{R}^3\,|\, r\leq R\})$, which implies $\xi$ is Lipschitz continuous on $\mathbb{R}^3$. \\

%\psi=r\phi_2\in W^{1,\infty}$\\
In order to show $\xi$ is compactly supported,
  we recall the decay $$ \frac{\Phi(\bbx)}{r} \to 0\quad\mbox{as}\quad |\bbx|\to \infty$$  from \eqref{Phi_decay} in Lemma \ref{lem_est_psi_con}. On the other hand,
for any  $x \in \mbox{spt}(\xi)$, we know  $\psi(x) \geq \frac 1 2 W r^2$
i.e. $$|\Phi(\bbx)/r|=\psi/r^2\geq \frac 1 2 W>0.$$ Hence, $\xi$ is compactly supported in $\mathbb{R}^3$. This finishes the proof. 
\end{proof}
\begin{comment}
\begin{remark}
 %\begin{enumerate}
% \item
 $$0<W\mu+\gamma\int \xi\dd\bbx$$ by using $I_\mu>0$. Indeed,
 \begin{equation*}\begin{split}
 0<I_\mu&=\int \psi \xi \dd\bbx-\int \xi^2\dd\bbx
 =\int \psi \xi \dd\bbx-\int \xi(\psi-\frac 1 2 W r^2-\gamma )_+\dd\bbx
 \\& =\int \psi \xi \dd\bbx-\int \xi(\psi-\frac 1 2 W r^2-\gamma )\dd\bbx=\int \xi(\frac 1 2 W r^2+\gamma )\dd\bbx
 =W\mu+\gamma\int \xi\dd\bbx. 
\end{split} \end{equation*}
%\item  \end{enumerate}

\end{remark}
\end{comment}
\vspace{15pt} 

\subsection*{Acknowledgments}{D.~Chae was supported partially  by NRF grant 2021R1A2C1003234.
K.~Choi was supported    by NRF grants 2018R1D1A1B07043065 and 2022R1A4A1032094.
 I.-J.~Jeong was supported by the NRF grant 2022R1C1C1011051.}

% ----------------------------------------------------------------
\bibliographystyle{amsplain}

% ----------------------------------------------------------------

\end{document}